\theoremstyle{theorem}
\newcommand{\set}[1]{\left\{ #1 \right\}}
\newcommand{\abs}[1]{| #1 |}
\newcommand{\pes}[1]{\langle{#1}\rangle}
\newcommand{\norm}[1]{\left\lVert#1\right\rVert}
\newcommand{\stnorm}[1]{\left\lVert#1\right\rVert_{\text{st}}}
\newcommand{\Z}{\mathbb{Z}}
\newcommand{\R}{\mathbb{R}}
\newcommand{\T}{\mathbb{T}}
\DeclareMathOperator{\sys}{sys}
\DeclareMathOperator{\vol}{vol}
\DeclareMathOperator{\area}{area}
\DeclareMathOperator{\conv}{conv}
\newtheorem{theorem}{Theorem}
\newtheorem{proposition}{Proposition}
\newtheorem{lemma}{Lemma}
\theoremstyle{definition}
\newtheorem{definition}{Definition}
\newtheorem*{remark}{Remark}
\numberwithin{equation}{section}
\numberwithin{theorem}{section}
\numberwithin{definition}{section}
\numberwithin{lemma}{section}
\numberwithin{conjecture}{section}
\numberwithin{proposition}{section}
\newcommand{%
    
    \import{./images/}{.pdf_tex}
}[2]{%
    
    \import{./images/}{#2.pdf_tex}
}
\definecolor{primarycream2}{RGB}{230,230,230}
\definecolor{secondarycream2}{RGB}{240,240,240}
\begin{document}

%




%

\title[In the Finsler shadow of Loewner]{Isosystolic inequalities on two-dimensional Finsler tori}




\author[F.~Balacheff]{Florent Balacheff}

\address{Florent Balacheff, Departament de Matem\`atiques, Universitat Aut\`onoma de Barcelona, Barcelona, Spain}

\email{Florent.Balacheff@uab.cat}

\author[T.~Gil Moreno de Mora]{ Teo Gil Moreno de Mora}

\address{Teo Gil Moreno de Mora, Departament de Matem\`atiques, Universitat Aut\`onoma de Barcelona, Barcelona, Spain \& LAMA, Universit\'e Paris-Est Cr\'eteil, Cr\'eteil, France}

\email{teo.gil-moreno-de-mora-i-sarda@u-pec.fr}

\date{\today}

\keywords{Busemann-Hausdorff and Holmes-Thompson area, isosystolic inequalities, Finsler metrics, stable norm, systole.}
\subjclass[2010]{Primary: 53C23}
\thanks{The first author acknowledges support by the FSE/AEI/MICINN grant RYC-2016-19334  ``Local and global systolic geometry and topology", the FEDER/AEI/MICIU grant PGC2018-095998-B-I00 ``Local and global invariants in geometry" and the AGAUR grant 2017-SGR-1725. The second author acknowledges support by the Spanish MEFP grant BDNS 512590 ``Beca de Colaboraci\' on", the ``Fondation Sciences Math\'ematiques de Paris" PGSM Master scholarship and the ``\'Ecole Doctorale MSTIC" doctoral fellowship.}




%

\begin{abstract} 
In this article we survey all known optimal isosystolic inequalities on two-dimensional Finsler tori involving the following two central notions of Finsler area: the Busemann-Hausdorff area and the Holmes-Thompson area. We also complete the panorama by establishing the following new optimal isosystolic inequality that is deduced from prior work by Burago and Ivanov: the Busemann-Hausdorff area of a Finsler reversible $2$-torus with unit systole is at least equal to $\pi/4$.
\end{abstract}

\maketitle

\section{Introduction}
The purpose of this article is to present a self-contained survey on several optimal isosystolic inequalities for the two-dimensional torus, and to etablish a new one. We will consider length metrics arising from infinitesimal convex structures, namely Finsler (reversible or not) metrics. It includes smooth Riemannian metrics as a special case. For such length metrics, there exist various notions of area, and we choose to focus on the following two central notions: the Busemann-Hausdorff area and the Holmes-Thompson area. These two notions are particularly relevant for isosystolic inequalities: Busemann-Hausdorff area generalizes the notion of Hausdorff measure, and is natural from the metric point of view, while Holmes-Thompson area is a symplectic invariant of the geodesic flow, and is natural from the dynamic point of view. 

\subsection{Finsler metrics}\label{subsec:Finsler}
A {\it (continuous) Finsler metric} on the $2$-torus $\T^2$ is a continuous function $F:T\T^2\to \R_+$ such that the restriction $F(x,\cdot)$ to each tangent space $T_x\T^2$ is a norm that we denote by $\|\cdot\|^F_x$. Let us emphasize that we do not require the norm to be symmetric, but only to be a function positive outside the origin, convex and positively homogeneous. In particular, the subset of vectors $v$ in $T_x\T^2$ satisfying $\|v\|^F_x\leq 1$ is a convex body $K_x\subset T_x\T^2$ containing the origin in its interior. Therefore, a Finsler metric amounts to a collection $\{K_x\}_{x \in \T^2}$ of convex bodies that continuously depends on the point $x$. If each one of these convex bodies is symmetric, the metric is said to be {\it reversible}. If each one is an ellipse centered at the origin that smoothly depends on the point, the metric is said to be {\it Riemannian}, and in particular, is reversible.
Denote by $\pi : \R^2 \to \T^2$ the universal covering map obtained by identifying $\T^2$ with  the quotient space $\R^2/\Z^2$. A Finsler metric $F$ on $\T^2$ induces a $\Z^2$-periodic Finsler metric $\tilde{F}$ on $\R^2$ through the formula $\tilde{F}(\tilde{x},\tilde{v})=F(\pi(\tilde{x}),d_{\tilde{x}}\pi(\tilde{v}))$. Using the canonical identification $T_{\tilde{x}}\R^2\simeq \R^2$, the associated collection of convex bodies $\widetilde{K}_{\tilde{x}}\subset T_{\tilde{x}}\R^2$ thus defines a continuous $\Z^2$-periodic map 
\begin{eqnarray*}
\widetilde{K}: \R^2 &\to &\mathcal{K}_0(\R^2)\\
\tilde{x}&\mapsto&\widetilde{K}_{\tilde{x}}=d_{\tilde{x}}\pi^{-1}(K_{\pi(\tilde{x})})
\end{eqnarray*} 
where  $\mathcal{K}_0(\R^2)$ denotes the space of convex bodies in $\R^2$ containing the origin in their interior and endowed with the Hausdorff topology. 
If the above map is constant, we will say that the Finsler metric on $\T^2$ is {\it flat}. And for flat metrics, we will denote both $\widetilde{K}_{\tilde{x}}$ and $K_x$ simply by $K$. Remark that Riemannian flat metrics on $\T^2$ can be classified using quotients of the Euclidean plane by full rank lattices, fact that will be useful in section \ref{sec:riem}. The situation is completely different in the Finsler case, as the space of flat Finsler metrics is already huge. The use of lattices in this context is not particularly decisive, and this is why we have decided to fix once and for all the lattice to be $\Z^2$ when describing $\T^2$ as a quotient of $\R^2$.

\subsection{Systole and Finsler areas}
Given a Finsler metric $F$ on  $\T^2$, the length of a piecewise smooth curve $\gamma : [a,b] \to \T^2$ is defined using the formula
$$
\ell_F(\gamma):=\int_a^b \|\dot{\gamma}(t)\|^F_{\gamma(t)}dt.
$$
This length functional gives rise to a Finsler distance $d_F$ on $\T^2$ obtained by minimizing the length of such curves connecting two given points. This Finsler distance may be not symmetric if the metric is not reversible. 
 A geodesic is a curve which is everywhere locally a distance minimizer.

We now present the first ingredient for isosystolic inequalities, namely the systole.
\begin{definition}
Given a Finsler metric $F$ on $\T^2$, the {\it systole} is defined as the quantity
$$
\sys(\T^2,F):=\inf \{\ell_F(\gamma) \mid \gamma \, \, \text{non-contractible closed curve in} \, \, \T^2\}.
$$
\end{definition}
It is easy to see that the systole can be read on the universal cover of the two-torus using the formula
$
\sys(\T^2,F)=\min \{d_{\tilde{F}}(\tilde{x},\tilde{x}+z) \mid \tilde{x} \in [0,1]^2 \, \, \text{and} \, \, z \in \Z^2\setminus\{0\}\}.
$
In particular, the value $\sys(\T^2,F)$ is always positive and the infimum is actually a minimum realized by the length of a shortest non-contractible closed geodesic. 

The second ingredient for isosystolic inequalities is the two-dimensional volume, or area. For Finsler manifolds, there exist many notions of volume, but in this article we will be interested in the following two central notions.
First recall that given a convex body $\widetilde{K}\subset \R^2$ containing the origin in its interior, its polar body is the convex body defined by
$\widetilde{K}^\circ:=\{\tilde{x} \in \R^2 \mid \langle \tilde{x},\tilde{y}\rangle\leq 1 \, \, \text{for all} \, \, \tilde{y} \in \widetilde{K}\}$
where $\langle \cdot,\cdot \rangle$ denotes the Euclidean scalar product of $\R^2$.
Denote by $|\cdot|$ the standard Lebesgue measure on $\R^2$.

\begin{definition}\label{def:area}
The {\it Busemann-Hausdorff area} of a Finsler $2$-torus $(\T^2,F)$ is defined as the quantity
$$
\area_{BH}(\T^2,F):=\int_{[0,1]^2} {\pi\over |\widetilde{K}_x|}\, d\tilde{x}_1 d\tilde{x}_2,
$$
and its {\it Holmes-Thompson area} is defined as
$$
\area_{HT}(\T^2,F):=\int_{[0,1]^2} {|\widetilde{K}^\circ_x|\over \pi}\, d\tilde{x}_1  d\tilde{x}_2.
$$
\end{definition}

So the Busemann-Hausdorff notion of area corresponds to integrating over a fundamental domain the unique multiple of the Lebesgue measure for which the measure of $\widetilde{K}_x$ equals the Lebesgue measure of the Euclidean unit disk, while the Holmes-Thompson notion of area corresponds to integrating the unique multiple of the Lebesgue measure for which the measure of the polar body $\widetilde{K}^\circ_x$ equals the Lebesgue measure of the Euclidean unit disk.
When the convex $K$ is symmetric, Blaschke's inequality \cite{Blaschke} asserts that $|K|\cdot|K^\circ|\leq \pi^2$ with equality if and only if $K$ is an ellipse. Therefore the inequality $\area_{HT}(\T^2,F)\leq \area_{BH}(\T^2,F)$ holds true for Finsler reversible metrics, with equality if and only if $F$ is a continuous Riemannian metric. Further observe that for Riemannian metrics both notions of area coincide with the standard notion of Riemannian area.

It is worth saying that these two notions of area do not depend on the specific choice of the Euclidean scalar product in Definition \ref{def:area}. In fact, both notions admit an intrinsic definition we will not present here. Let us just mention that Busemann-Hausdorff area coincides with the $2$-dimensional Hausdorff measure for reversible metrics (see \cite{Busemann 1947}), while Holmes-Thompson area coincides with the symplectic volume (normalized by a suitable constant) of the bundle of the dual convex bodies in $T^\ast \T^2$ (see \cite{HT79}).

\subsection{Optimal isosystolic inequalities}
Given a choice  denoted by $\area_\ast$ with $\ast=BH$ or $HT$ of one of these two notions of area, the {\it systolic $\ast$-area} of a Finsler metric $F$ is defined as the quotient
$$
\frac{\area_\ast(\T^2,F)}{\sys(\T^2,F)^2}.
$$ 
Observe that this functional is invariant by rescaling the metric $F$ into $\lambda F$ for any positive constant $\lambda$.
An {\it isosystolic inequality} is then a positive lower bound on the systolic $\ast$-area holding for a large class of metrics. Equivalently, it amounts to an inequality of the type 
$$
\area_\ast(\T^2,F)\geq C \cdot \sys(\T^2,F)^2
$$
for some positive constant $C$. If the constant $C$ can not be improved, the corresponding isosystolic inequality is said to be {\it optimal}. In the absence of isosystolic inequality, that is when the infimum of the systolic $\ast$-area function over some class of metrics is zero, we say that {\it systolic freedom} holds.
 
 In this paper we will be concerned with the following classes of metrics: flat Riemannian metrics, Riemannian metrics, flat Finsler reversible metrics, flat Finsler metrics, Finsler reversible metrics, and finally Finsler metrics.
 Here is a table summarizing the currently known optimal  isosystolic inequalities on $\T^2$ for these classes of metrics and the two notions of area we are interested in.
 
\bigskip

\begin{table}[h!]\label{table}

\caption{Optimal constants $C$ for several classes of Finsler metrics}

\begin{tabular}{c@{\hspace{0.45cm}}c@{\hspace{0.45cm}}c}
    \rowcolor{primarycream2} \cellcolor{white} & Reversible & Non-reversible  \\ \arrayrulecolor{white}\specialrule{0.3mm}{0.3mm}{0.3mm}
    
    \rowcolor{secondarycream2} \cellcolor{primarycream2} 
    \begin{tabular}{@{}c@{}} \cellcolor{primarycream2} Flat Riemannian \\ \cellcolor{primarycream2} metrics\end{tabular} & \begin{tabular}{@{}c@{}}Folklore \\ $ \sqrt{3}/2$ \end{tabular} & $\times$ \\
    \arrayrulecolor{white}\specialrule{0.3mm}{0.3mm}{0.3mm}
        
    \rowcolor{secondarycream2} \cellcolor{primarycream2} \begin{tabular}{@{}c@{}} \cellcolor{primarycream2} Riemannian \\ \cellcolor{primarycream2} metrics \end{tabular} & \begin{tabular}{@{}c@{}}Loewner 1949 \\ $\sqrt{3}/2$ \end{tabular} & $\times$ \\
    \arrayrulecolor{white}\specialrule{0.3mm}{0.3mm}{0.3mm}
        
    \rowcolor{secondarycream2} \cellcolor{primarycream2} \begin{tabular}{@{}c@{}} \cellcolor{primarycream2} Flat Finsler metrics \\ \cellcolor{primarycream2} BH-area \end{tabular} & \begin{tabular}{@{}c@{}}Minkowski 1896 \\ $\pi/4$ \end{tabular} & \begin{tabular}{@{}c@{}} Systolic freedom \\ $0$ \end{tabular} \\
    \arrayrulecolor{white}\specialrule{0.3mm}{0.3mm}{0.3mm}
        
    \rowcolor{secondarycream2} \cellcolor{primarycream2} \begin{tabular}{@{}c@{}} \cellcolor{primarycream2} Finsler metrics \\ \cellcolor{primarycream2} BH-area \end{tabular} & \begin{tabular}{@{}c@{}}Open \\  ? \end{tabular} & \begin{tabular}{@{}c@{}} Systolic freedom \\ $0$ \end{tabular} \\
    \arrayrulecolor{white}\specialrule{0.3mm}{0.3mm}{0.3mm}
        
    \rowcolor{secondarycream2} \cellcolor{primarycream2} \begin{tabular}{@{}c@{}} \cellcolor{primarycream2} Flat Finsler metrics \\ \cellcolor{primarycream2} HT-area \end{tabular} & \begin{tabular}{@{}c@{}}Minkowski + Mahler \\ $2/\pi$ \end{tabular} & \begin{tabular}{@{}c@{}} \' Alvarez-B-Tzanev 2016 \\ $3/2\pi$ \end{tabular} \\
    \arrayrulecolor{white}\specialrule{0.3mm}{0.3mm}{0.3mm}
        
    \rowcolor{secondarycream2} \cellcolor{primarycream2} \begin{tabular}{@{}c@{}} \cellcolor{primarycream2} Finsler metrics \\ \cellcolor{primarycream2} HT-area \end{tabular} & \begin{tabular}{@{}c@{}} Sabourau 2010 \\ $2/\pi$ \end{tabular} & \begin{tabular}{@{}c@{}} \' Alvarez-B-Tzanev 2016 \\ $3/2\pi$ \end{tabular}
\end{tabular}
\bigskip
\end{table}
 
  It is important to observe that all the flat isosystolic optimal inequalities here are implied by the corresponding non-flat ones. This is not a coincidence as the proof for all the non-flat classes in the above table always proceeds with the same strategy: first find a flat metric in this class with lower systolic area, and then apply the corresponding flat isosystolic inequality. However we have decided to express the flat case of these isosystolic inequalities in independent statements to underline  their own importance, and their connection with several fundamental results in the geometry of numbers and convex geometry as we shall later explain.

 Let us emphasize that until now the question of finding the optimal lower bound for the systolic BH-area in the reversible Finsler case remained open, as reflected in the above table. From the fact that $\area_{BH}\geq \area_{HT}$ and Sabourau's isosystolic inequality, we easily obtain  that for a Finsler reversible metric $F$ on $\T^2$ the following inequality holds true: $\area_{BH}(\T^2,F)\geq \frac{2}{\pi} \sys^2(\T^2,F)$. 
 Never\-theless it is reasonable to think that the optimal constant should be $\pi/4$ like in the flat case, as suggested to the first author by J.C. \'Alvarez Paiva in a private conversation. In section \ref{sec:BH} we will establish this conjecture: 

\begin{theorem}\label{th:opt}
Let $F$ be a Finsler reversible metric on $\T^2$. Then the following optimal inequality holds true:
\[
\area_{BH}(\T^2,F)\geq \frac{\pi}{4} \sys^2(\T^2,F).
\]
 Equality holds for the flat metric corresponding to the supremum norm $\|\cdot\|_\infty$.
\end{theorem}

 This result finally settles the search for optimal isosystolic inequalities on two-dimensional Finsler tori for the Busemann-Hausdorff area and the Holmes-Thompson area, and definitely complete Table 1.
 
The main step in the proof of Theorem \ref{th:opt} is related to the asymptotic geometry of  universal covers of Finsler tori. Namely, according to \cite{burago} the universal cover of a Finsler torus admits a unique norm---called the {\it stable norm}---which asymptotically approximates the pullback Finsler metric, see formula (\ref{eq:st}). This norm gives rise to a flat Finsler metric on $\T^2$ abusively also called stable norm.
Theorem \ref{th:opt}  is then a consequence (see section \ref{sec:BH}) of the following statement:
\smallskip

\noindent {\it ($\ast)$ passing from a Finsler reversible metric on a $2$-torus to its stable norm decreases the Busemann-Hausdorff area}.
\smallskip

\noindent It turns out that this result can be deduced from prior work by Burago and Ivanov. More precisely, in \cite{burago_ivanov_2012}, they proved that
 \smallskip

\noindent  {\it ($\ast \ast $) a region in a two-dimensional affine subspace of a normed space has the least Hausdorff area among all compact surfaces with the same boundary},
 \smallskip

\noindent  which has been shown to be equivalent to statement ($\ast$) in \cite[Theorem 1]{burago_ivanov}. 
In the present paper, our main contribution is to propose a direct proof of statement ($\ast$) based on the ideas of Burago and Ivanov, see section \ref{sec:decreasing_area_BH}.

\subsection{Organization of the paper.}

 In section \ref{sec:riem} we start by explaining how to derive the optimal systolic area for flat Riemannian metrics and its connection to the two-dimensional Hermite constant, and next prove Loewner's optimal isosystolic inequality for Riemannian metrics. In section \ref{sec:flatFinsler} we derive all the optimal isosystolic inequalities for flat Finsler metrics. More precisely, we first treat the case of Busemann-Hausdorff area in the reversible case, which reduces to Minkowski's first theorem, and explain why systolic freedom appears in the non-reversible case. In a second time we focus on Holmes-Thompson area, treating first the reversible case which is deduced from a combination of Minkowski's first theorem and Mahler's volume product inequality in dimension $2$, and secondly the non-reversible case by sketching the arguments appearing in \cite{ABT}. In section \ref{sec:stable_norm} we present the stable norm, which describes the asymptotic geometry of the universal covering space of Finsler tori, and the associated notion of calibrating functions introduced in \cite{burago_ivanov} together with their main properties that will be needed in section \ref{sec:HT} and \ref{sec:BH} in order to prove all the optimal inequalities for (non-flat) Finsler metrics. In section \ref{sec:HT}, we prove two optimal isosystolic inequalities on the $2$-torus for the Holmes-Thompson notion area: one for reversible Finsler metrics, and another for (possibly non-reversible) Finsler ones. 
  The same strategy applies for both proofs and proceeds as follows: first prove that the associated stable norm has smaller systolic area than the original metric, and then apply the corresponding flat optimal systolic inequality already proved in section \ref{sec:flatFinsler}. The main step in the proof boils down to the following result due to \cite{burago_ivanov} : passing for a $2$-torus from a Finsler metric to its stable norm decreases the Holmes-Thompson area. Finally in section \ref{sec:BH}, we prove the new optimal isosystolic inequality on reversible Finsler $2$-tori for Busemann-Hausdorff area. Here again the main step boils down to prove that passing for a $2$-torus from a reversible Finsler metric to its stable norm decreases the Busemann-Hausdorff area. To conclude this last section, we describe a counterexample to this Busemann-Hausdorff area decreasing property in the non-reversible case, communicated to us by Ivanov and reproduced here with his kind permission.

\section{Isosystolic inequalities for flat and non-flat Riemannian metrics}\label{sec:riem}

We first explain how the optimal isosystolic inequality for flat Riemannian metrics on $\T^2$ reduces to compute the Hermite constant in dimension $2$. Then we prove Loewner's theorem that states the optimal isosystolic inequality for Riemannian metrics on $\T^2$. Recall that for Riemannian metrics, both Busemann-Hausdorff  and Holmes-Thompson notions of area coincide with the standard Riemannian area simply denoted by $area$ in this section.

\subsection{The flat Riemannian case: Hermite constant in dimension $2$.}
We start by recalling the definition of Hermite constant in an arbitrary dimension $n$.
Given a full rank lattice $L$ in $\R^n$ endowed with the standard Euclidean structure $\langle \cdot,\cdot \rangle$, its determinant $\det(L)$ is defined as the absolue value of the determinant of any of its basis, while its norm $N(L)$ is defined as the minimum value $\langle \lambda,\lambda \rangle$ over all elements $\lambda \in L\setminus \{0\}$. The Hermite invariant of $L$ is then defined as the quantity
$$
\mu(L)=\frac{N(L)}{\det(L)^{2\over n}}
$$
and the Hermite constant $\gamma_n$  as the supremum value of Hermite invariant $\mu(L)$  over all full rank lattices $L$ of $\R^n$. The Hermite constant is finite in every dimension, and its exact value is known only for dimensions $n=1,\ldots,8$ and $24$. Asymptotically it behaves like ${n\over 2\pi e}$ up to a factor 2, see \cite{CS98}. We are interested in the following statement.

\begin{theorem}[Folklore]
$\gamma_2={2 \over \sqrt{3}}$.
\end{theorem}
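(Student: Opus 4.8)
The plan is to show two inequalities: $\gamma_2 \geq 2/\sqrt{3}$ by exhibiting the hexagonal lattice as an explicit extremal example, and $\gamma_2 \leq 2/\sqrt{3}$ by a reduction-theory argument bounding the Hermite invariant of an arbitrary rank-$2$ lattice.

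For the lower bound, consider the hexagonal lattice $L_0$ generated by $v_1 = (1,0)$ and $v_2 = (1/2, \sqrt{3}/2)$. A direct computation gives $\det(L_0) = \sqrt{3}/2$, and one checks that the shortest nonzero vectors all have squared length $1$ (the six vectors $\pm v_1, \pm v_2, \pm(v_1 - v_2)$), so $N(L_0) = 1$. Hence $\mu(L_0) = 1/(\sqrt{3}/2) = 2/\sqrt{3}$, giving $\gamma_2 \geq 2/\sqrt{3}$.

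For the upper bound — which is the substantive part — I would use the classical reduction of binary quadratic forms. Given any rank-$2$ lattice $L$, pick a shortest nonzero vector $e_1$, so $\langle e_1, e_1\rangle = N(L)$, and then pick $e_2$ to be a shortest vector among those completing $e_1$ to a basis; after possibly replacing $e_2$ by $e_2 + k e_1$ for a suitable integer $k$ and flipping signs, one may assume $2|\langle e_1, e_2\rangle| \leq \langle e_1, e_1\rangle$. Minimality of $e_1$ forces $\langle e_2, e_2\rangle \geq \langle e_1, e_1\rangle = N(L)$. Then
\[
\det(L)^2 = \langle e_1,e_1\rangle \langle e_2,e_2\rangle - \langle e_1,e_2\rangle^2 \geq N(L)^2 - \tfrac14 N(L)^2 = \tfrac34 N(L)^2,
\]
using $\langle e_2,e_2\rangle \geq N(L)$ and $\langle e_1,e_2\rangle^2 \leq \tfrac14 N(L)^2$. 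Taking square roots yields $\det(L) \geq \tfrac{\sqrt 3}{2} N(L)$, i.e. $\mu(L) = N(L)/\det(L) \leq 2/\sqrt{3}$. Taking the supremum over $L$ gives $\gamma_2 \leq 2/\sqrt{3}$, completing the proof.

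The main obstacle — really the only delicate point — is justifying the normalization $2|\langle e_1, e_2\rangle| \leq \langle e_1, e_1\rangle$ together with $\langle e_2,e_2\rangle \geq \langle e_1,e_1\rangle$ simultaneously: one must argue that after the translation $e_2 \mapsto e_2 + ke_1$ bringing $\langle e_1,e_2\rangle$ into the interval $[-\tfrac12\langle e_1,e_1\rangle, \tfrac12\langle e_1,e_1\rangle]$, the vector $e_2$ still has length at least that of $e_1$. This is where minimality of $e_1$ over \emph{all} nonzero lattice vectors (not just basis-completing ones) is used, and it is worth stating carefully; everything else is elementary algebra.
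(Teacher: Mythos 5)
Your proof is correct and is essentially the same reduction argument as the paper's: the paper normalizes the shortest vector to $(1,0)$ and reduces the second basis vector to the region $|v_1|\le 1/2$, $v_1^2+v_2^2\ge 1$, which is exactly your pair of conditions $2|\langle e_1,e_2\rangle|\le\langle e_1,e_1\rangle$ and $\langle e_2,e_2\rangle\ge N(L)$ written in coordinates, with $\det(L)=v_2\ge\sqrt{3}/2$ corresponding to your Gram-determinant bound. The ``delicate point'' you flag is immediate: after the shear $e_2\mapsto e_2+ke_1$ the vector is still a nonzero lattice vector, hence has squared length at least $N(L)$ by the definition of $N(L)$ as a minimum over all nonzero lattice vectors, and your hexagonal example supplies the equality case that the paper records in the remark following the theorem.
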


\begin{proof}
Because the Hermite invariant $\mu(\cdot)$ is invariant under scaling  and rotating the lattice, we can suppose that $L=\Z(1,0)\oplus \Z v$ where $v=(v_1,v_2)$ has norm at least $1$.  By possibly changing $v=(v_1,v_2)$ into $v=(v_1,-v_2)$ in the preceding expression of $L$ (which corresponds to a reflection  of the lattice along the $x$ axis that does not change the value of $\mu(L)$), we can also suppose that $v_2>0$. Finally by possibly replacing $v$ by $v+n(1,0)$ for some $n \in \Z$ (which does not change $L$ at all), we can suppose that $v$ belongs to the domain $|v_1|\leq 1/2$, $v_2>0$ and $v_1^2+v_2^2\geq 1$. We have $N(L)=1$ as a shortest vector is $u=(1,0)$. It is then straightforward to check that $\det(L)=\det(u,v)=v_2$ is minimal when the second coordinate of $v$ is minimal, that is for $v=(\pm1/2,\sqrt{3}/2)$.
\end{proof}

Note that this supremum is reached if and only if $L$ is an hexagonal lattice (that is, a lattice generated by two vectors forming an angle of $2\pi/3$ and of equal lengths). 

Now observe that any flat two-dimensional Riemannian torus is isometric to the quotient of the Euclidean plane by some lattice $L$ (just choose a linear map $T:\R^2 \to \R^2$ whose sends the ellipse formed by unit vectors at some (and so any) point to a circle and set $L=T(\Z^2)$). For such a flat $2$-torus $\T^2_L:=(\R^2/L,\langle\cdot,\cdot\rangle)$ we easily find that $\sys(\T^2_L)=\sqrt{N(L)}$ while $\area(\T^2_L)=\det(L)$. So the previous result amounts to the following optimal isosystolic inequality for flat Riemannian $2$-tori.

\begin{theorem}[Hermite constant in dimension $2$, systolic formulation]\label{th:flatRIemannian}
Let $g$ be a Riemannian flat metric on $\T^2$. Then the following holds true:
\[
\area(\T^2,g)\geq \frac{\sqrt{3}}{2}\sys^2(\T^2,g).
\]
Furthermore equality holds if and only if $(\T^2,g)$ is isometric to the quotient of the Euclidean plane by some hexagonal lattice.
\end{theorem}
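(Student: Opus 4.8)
The statement to prove is Theorem \ref{th:flatRIemannian}: for a flat Riemannian metric $g$ on $\T^2$, $\area(\T^2,g)\geq \frac{\sqrt{3}}{2}\sys^2(\T^2,g)$, with equality iff $g$ is isometric to a flat hexagonal metric.

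The plan is to reduce the statement directly to the already-proven value $\gamma_2 = 2/\sqrt{3}$ of the Hermite constant in dimension $2$ and its equality case. The bridge is the observation made in the paragraph just before the theorem: every flat Riemannian $2$-torus is isometric to $\T^2_L = (\R^2/L, \langle\cdot,\cdot\rangle)$ for some full-rank lattice $L \subset \R^2$. Let me sketch the steps.

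\begin{proof}[Proof proposal]
\textbf{Step 1: reduction to a lattice quotient.} Given a flat Riemannian metric $g$ on $\T^2$, the collection of unit tangent spheres is a single ellipse $E$ in $\R^2 \simeq T_x\R^2$ (independent of $x$, by flatness, after passing to the universal cover). Choose a linear isomorphism $T: \R^2 \to \R^2$ sending $E$ to the Euclidean unit circle; then $T$ conjugates the flat metric $\tilde g$ on $\R^2$ to the standard Euclidean metric, and it sends the deck group $\Z^2$ to a full-rank lattice $L := T(\Z^2)$. Hence $(\T^2, g)$ is isometric to $\T^2_L$.

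\textbf{Step 2: identify systole and area with lattice invariants.} For $\T^2_L$, a non-contractible closed curve lifts to a path in $\R^2$ joining $\tilde x$ to $\tilde x + \lambda$ for some $\lambda \in L\setminus\{0\}$, and the shortest such path is the straight segment, of Euclidean length $\sqrt{\langle\lambda,\lambda\rangle}$. Minimizing over $\lambda$ and using the formula for the systole on the universal cover from the introduction gives $\sys(\T^2_L) = \sqrt{N(L)}$. Likewise a fundamental domain for $L$ has Euclidean area $\det(L)$, so $\area(\T^2_L) = \det(L)$. Therefore
\[
\frac{\area(\T^2,g)}{\sys^2(\T^2,g)} = \frac{\det(L)}{N(L)} = \frac{1}{\mu(L)}.
\]

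\textbf{Step 3: apply the Hermite bound.} Since $\mu(L) \leq \gamma_2 = 2/\sqrt{3}$, we get $\area(\T^2,g)/\sys^2(\T^2,g) \geq \sqrt{3}/2$, which is the desired inequality. This also shows the constant is attained, hence optimal, by taking $L$ hexagonal.

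\textbf{Step 4: equality case.} Equality forces $\mu(L) = \gamma_2$, and by the remark following the Folklore theorem this happens precisely when $L$ is an hexagonal lattice, i.e. $L = \lambda \cdot R(\Z u \oplus \Z u')$ for a scalar $\lambda$, a rotation $R$, and $u, u'$ of equal length making an angle $2\pi/3$. The corresponding $\T^2_L$ is then a flat hexagonal torus; conversely any flat hexagonal metric realizes equality. Since isometries of flat tori correspond to similarities of the defining lattices, this gives the ``if and only if up to isometry'' statement.
\end{proof}

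The only mildly delicate points are the careful justification in Step 1 that the ellipse of unit vectors is genuinely constant on $\R^2$ (this is exactly the definition of flat given in the introduction, applied to a Riemannian metric whose convex bodies $K_x$ are ellipsoids), and in Step 2 that minimizing the length of a lift over all non-contractible classes and all basepoints indeed yields $\sqrt{N(L)}$ — both are routine given the setup already laid out in the excerpt. I expect no real obstacle here: the substance of the theorem is entirely contained in the computation of $\gamma_2$, which has already been carried out, so this final statement is essentially a dictionary translation together with bookkeeping of the equality case.
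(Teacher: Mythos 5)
Your proposal is correct and follows exactly the route the paper takes: reduce a flat torus to a lattice quotient $\T^2_L$ via a linear map sending the constant unit ellipse to the circle, identify $\sys = \sqrt{N(L)}$ and $\area = \det(L)$, and invoke the computation $\gamma_2 = 2/\sqrt{3}$ together with the remark that equality holds precisely for hexagonal lattices. No substantive difference from the paper's argument.
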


\subsection{The Riemannian case: Loewner's isosystolic inequality}
We now prove Loewner's theorem (unpublished, see \cite{Pu52}), which describes the optimal isosystolic inequality for two-dimensional Riemannian tori.

\begin{theorem}[Loewner's isosystolic inequality, 1949]
Let $g$ be a Riemannian metric on $\T^2$. Then the following holds true:
\begin{equation}\label{eq:Loewner}
\area(\T^2,g)\geq \frac{\sqrt{3}}{2}\sys^2(\T^2,g).
\end{equation}
Furthermore equality holds if and only if  $(\T^2,g)$ is isometric to the quotient of the Euclidean plane by some hexagonal lattice.
\end{theorem}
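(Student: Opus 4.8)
The plan is to reduce the general Riemannian case to the flat one (Theorem \ref{th:flatRIemannian}) using the uniformization theorem. First I would recall that, by uniformization, any Riemannian metric $g$ on $\T^2$ is conformal to a flat metric: there is a flat metric $g_0$ and a smooth positive function $\phi$ on $\T^2$ with $g=\phi^2 g_0$. Realize $(\T^2,g_0)$ as $\R^2/\Lambda$ for a lattice $\Lambda$, and choose a basis $(e_1,e_2)$ of $\Lambda$ with $|e_1|$ a shortest nonzero vector, so that $\sys(\T^2,g_0)=|e_1|$ and $\area(\T^2,g_0)=\det\Lambda$. In the coordinates $(x,y)\in[0,1]^2\mapsto xe_1+ye_2$ one has $dA_{g_0}=\det\Lambda\,dx\wedge dy$, the $g$-area equals $\int_{[0,1]^2}\phi^2\,\det\Lambda\,dx\,dy$, and the $g$-length of any curve is obtained by integrating $\phi$ against $g_0$-arclength.

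Next, for each $t\in[0,1]$ the loop $\gamma_t:x\mapsto xe_1+te_2$ is non-contractible (it represents the class of $e_1$), so that
\[
\sys(\T^2,g)\le \ell_g(\gamma_t)=|e_1|\int_0^1\phi(xe_1+te_2)\,dx .
\]
Integrating this inequality over $t\in[0,1]$ and then applying the Cauchy--Schwarz inequality on the unit square $[0,1]^2$ gives
\[
\sys(\T^2,g)\le |e_1|\int_{[0,1]^2}\phi\,dx\,dy\le |e_1|\left(\int_{[0,1]^2}\phi^2\,dx\,dy\right)^{1/2}=\frac{|e_1|}{\sqrt{\det\Lambda}}\sqrt{\area(\T^2,g)} .
\]
Squaring and recognizing the Hermite invariant yields $\sys^2(\T^2,g)\le \mu(\Lambda)\,\area(\T^2,g)\le \gamma_2\,\area(\T^2,g)=\tfrac{2}{\sqrt3}\,\area(\T^2,g)$, which is precisely \eqref{eq:Loewner}.

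For the equality discussion: equality forces equality in Cauchy--Schwarz, hence $\phi$ is constant and $g$ is homothetic to the flat metric $g_0$; it also forces $\mu(\Lambda)=\gamma_2$, so $\Lambda$ is hexagonal by the computation of $\gamma_2$ and the characterization of the extremal lattice. Hence equality holds exactly when $g$ is isometric to a flat hexagonal metric.

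The only genuinely non-elementary ingredient — and thus the main obstacle — is the uniformization theorem, which is what allows us to write $g=\phi^2 g_0$ with a \emph{flat} reference metric; granting that, the remaining steps are just Fubini's theorem, Cauchy--Schwarz, and the already-established flat bound $\gamma_2=2/\sqrt3$. The one point that requires mild care is the choice of basis of $\Lambda$ so that the loops $\gamma_t$ both represent a shortest homotopy class of $g_0$ and sweep the torus exactly once; this is exactly the normalization used in the proof of Theorem \ref{th:flatRIemannian}.
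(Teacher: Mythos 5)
Your proof is correct, but it follows a genuinely different route from the paper's. The paper also begins with uniformization ($g$ isometric to $f g_0$ with $g_0$ flat), but then averages the conformal factor $f$ over the transitive compact group of translations of $(\T^2,g_0)$ with respect to its Haar measure: Fubini shows the averaged metric $\bar{g}=\bar{f}g_0$ has the same area, Jensen's inequality shows its systole is no smaller, and transitivity forces $\bar{f}$ to be constant, so the systolic ratio of $g$ is bounded below by that of the flat metric $g_0$, to which Theorem \ref{th:flatRIemannian} is applied; the equality case comes from the equality case in Jensen. You instead bound $\sys(\T^2,g)$ by the $g$-length of the explicit family of parallel loops $\gamma_t$ in the class of a shortest lattice vector $e_1$, integrate over $t$, and apply Cauchy--Schwarz to the conformal factor, arriving directly at $\sys^2(\T^2,g)\leq \mu(\Lambda)\area(\T^2,g)\leq \gamma_2\,\area(\T^2,g)$. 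Each approach buys something: the paper's averaging argument requires no choice of basis or of homotopy class and is the conceptual template reused throughout the paper (replace the metric by a more homogeneous one without increasing the systolic ratio, exactly as the stable norm is used in the Finsler sections), while yours is more elementary, avoiding Haar measure and Jensen entirely. Two small points you should make explicit: a shortest nonzero vector $e_1$ of $\Lambda$ is primitive and hence does extend to a basis $(e_1,e_2)$, which is what guarantees both $\sys(\T^2,g_0)=|e_1|$ and that the loops $\gamma_t$ sweep the torus once; and in the equality discussion you need equality simultaneously in Cauchy--Schwarz (forcing $\phi$ constant, hence $g$ homothetic to $g_0$) and in $\mu(\Lambda)=\gamma_2$ (forcing $\Lambda$ hexagonal, by the characterization of the extremal lattice recorded after the folklore theorem). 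With those remarks added, your argument is complete.
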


\begin{proof} 
First recall that Riemannian metrics are assumed to be smooth, see subsection \ref{subsec:Finsler}.
The uniformization theorem (see \cite{MT02} for instance) ensures that $g$ is isometric to a metric of the form $f g_0$ where $f$ is a positive smooth function on $\T^2$ and $g_0$ a flat metric obtained as the quotient of the plane with its Euclidean structure by some full rank lattice. We observe in particular that $g_0$ always admits a transitive compact subgroup ${\mathcal I}$ of isometries corresponding to Euclidean translations. This compact Lie group possesses a unique normalized  Haar measure $\mu$. 
Denote by
\[
\bar{f}:=\int_{\mathcal I} (f \circ I) d\mu
\]
the averaged conformal factor to which we associate the new Riemannian metric $\bar{g}=\bar{f} g_0$.
First observe that
\begin{eqnarray*}
\area(\T^2,\bar{g})&=&\int_{\T^2}\, dv_{\bar{g}}\\
&=&\int_{\T^2}\int_{\mathcal I} (f \circ I) \, d\mu \, dv_{g_0}\\
&=&\int_{\mathcal I}\int_{\T^2} (f \circ I) \, dv_{g_0} \, d\mu \qquad  \text{(by Fubini)} \\
&=&\int_{\mathcal I}\int_{\T^2} f \, dv_{g_0} \, d\mu  \qquad  \text{($I$ being an isometry of $g_0$)}\\
&=&\int_{\mathcal I} \area(\T^2,g) \, d\mu\\
&=& \area(\T^2,g).
\end{eqnarray*}
Besides, for any non-contractible closed curve $\gamma:S^1\to \T^2$, we have
\begin{eqnarray*}
\ell_{\bar{g}}(\gamma)&=&\int_{S^1} \|\dot{\gamma}(t)\|^{\bar{g}}_{\gamma(t)} dt=\int_{S^1} \sqrt{\bar{f}}\cdot\|\dot{\gamma}(t)\|^{g_0}_{\gamma(t)} dt\\
&=&\int_{S^1} \sqrt{\int_{\mathcal I} (f \circ I) \, d\mu}  \cdot \|\dot{\gamma}(t)\|^{g_0}_{\gamma(t)} dt\\
&\geq&\int_{S^1} \int_{\mathcal I}  \sqrt{f \circ I} \, d\mu  \cdot \|\dot{\gamma}(t)\|^{g_0}_{\gamma(t)} dt \qquad   \text{(by Jensen's inequality)}\\
&=&\int_{\mathcal I} \int_{S^1}  \sqrt{f \circ I}\cdot \|\dot{\gamma}(t)\|^{g_0}_{\gamma(t)} dt \, d\mu  \qquad  \text{(by Fubini again)} \\
&=&\int_{\mathcal I} \ell_g(I\circ \gamma) \, d\mu  \\
&\geq&\int_{\mathcal I} \sys(\T^2,g) \, d\mu \qquad  \text{($I\circ \gamma$ being non-contractible)}\\
&= &\sys(\T^2,g).
\end{eqnarray*}
Therefore we conclude that $\sys(\T^2,\bar{g})\geq \sys(\T^2,g)$, and consequently that
\begin{eqnarray}\label{eq:average}
\frac{\area(\T^2,g)}{\sys^2(\T^2,g)}\geq \frac{\area(\T^2,\bar{g})}{\sys^2(\T^2,\bar{g})}.
\end{eqnarray}

Now using the transitivity of $\mathcal I$, we observe that the function $\bar{f}$ is constant as for any $I \in {\mathcal I}$ we have $\bar{f}\circ I=\bar{f}$ by construction. By homogeneity of the systolic area, we finally deduce that 
$$
\frac{\area(\T^2,g)}{\sys^2(\T^2,g)}\geq \frac{\area(\T^2,g_0)}{\sys^2(\T^2,g_0)}.
$$
Therefore we derive from Theorem \ref{th:flatRIemannian} the desired inequality.

The equality case in (\ref{eq:average}) occurs if and only if $f \circ I$ is constant for all $I \in {\mathcal I}$, that is when $f$ itself is constant and the metric $g$ is isometric to a flat one. Therefore the equality case in (\ref{eq:Loewner}) occurs if and only if $(\T^2,g)$ is isometric to the quotient of the Euclidean plane by some hexagonal lattice.
\end{proof}

\section{Isosystolic inequalities for flat Finsler metrics}\label{sec:flatFinsler}

In this section, we survey optimal isosystolic inequalities for Finsler flat metrics on the two-torus for the two notions of area we are interested in.

\subsection{Busemann-Hausdorff area in the flat reversible case}

First recall the celebrated foundational result of the geometry of numbers in the $2$-dimensional case, see \cite{Minkowski}.

\begin{theorem}[Minkowski's first theorem, 1896]\label{th:firstMin}
Let $K \subset \R^2$ be a symmetric convex body such that $\text{int}(K) \cap \Z^2 = \set{0}$. Then its Lebesgue measure satisfies 
$$
\abs{K} \leq 4.
$$
Equality holds when $K$ is the unit disc of the supremum norm $\|\cdot\|_\infty$.
\end{theorem}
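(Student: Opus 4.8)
The plan is to deduce the bound from a pigeonhole (Blichfeldt-type) argument applied to the half-body $\tfrac12 K$. The first step I would isolate as an elementary lemma: \emph{if $U\subset\R^2$ is measurable with $\abs{U}>1$, then there exist two distinct points $p,q\in U$ with $p-q\in\Z^2$.} To prove it, consider the counting function $x\mapsto\sum_{z\in\Z^2}\mathbf{1}_U(x+z)$ and integrate it over the fundamental domain $[0,1)^2$; by $\Z^2$-periodicity of the sum this integral equals $\abs{U}>1$, so on a subset of $[0,1)^2$ of positive measure the sum is at least $2$. Any such $x$ gives two lattice translates $x+z_1,x+z_2\in U$ with $z_1\neq z_2$, and $p=x+z_1$, $q=x+z_2$ have difference $z_1-z_2\in\Z^2\setminus\set{0}$.

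Next I would argue by contradiction. Suppose $\abs{K}>4$ and set $U:=\operatorname{int}\!\big(\tfrac12 K\big)$, an open convex set with $\abs{U}=\tfrac14\abs{K}>1$ (the boundary of a convex body being Lebesgue-null). The lemma produces distinct $p,q\in\operatorname{int}(\tfrac12 K)$ with $v:=p-q\in\Z^2\setminus\set{0}$. Then $2p,2q\in\operatorname{int}(K)$; by the symmetry hypothesis $-2q\in\operatorname{int}(K)$ as well; and since the interior of a convex set is convex and contains the open segment between two of its points, the midpoint $\tfrac12(2p)+\tfrac12(-2q)=v$ lies in $\operatorname{int}(K)$. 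Thus $v$ is a nonzero lattice point in $\operatorname{int}(K)$, contradicting the hypothesis $\operatorname{int}(K)\cap\Z^2=\set{0}$. Hence $\abs{K}\leq 4$.

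The only point requiring care is the bookkeeping between the closed and open versions: one must run the pigeonhole argument on the \emph{open} body $\operatorname{int}(\tfrac12 K)$ rather than on $\tfrac12 K$ itself, so that the lattice point produced is genuinely interior and genuinely contradicts the stated hypothesis; passing to the open body is harmless because $\partial K$ has measure zero, so the strict inequality $\abs{U}>1$ is preserved. I would also note that the constant $4$ cannot be improved: the square $K=[-1,1]^2$ satisfies $\operatorname{int}(K)\cap\Z^2=\set{0}$ and $\abs{K}=4$, which will be relevant when this theorem is used to establish optimality of the flat Busemann–Hausdorff isosystolic constant $\pi/4$.
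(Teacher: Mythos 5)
Your proof is correct and takes essentially the same route as the paper: a Blichfeldt/pigeonhole argument applied to the half-body $\tfrac12 K$, followed by symmetry and convexity to place a nonzero lattice vector in the interior of $K$. The only cosmetic differences are that you establish the pigeonhole step by integrating the counting function and deal with the interior issue by working with the open set $\operatorname{int}\bigl(\tfrac12 K\bigr)$, whereas the paper phrases the pigeonhole as non-injectivity of the covering map $\R^2\to\T^2$ on the half-body and instead shrinks $K$ to $K'=\lambda K\subset\operatorname{int}(K)$ with $\lambda<1$.
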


In fact, it is even true that  equality holds if and only if  $K$ is the image of the previous square under some element of $SL_2(\Z)$. But we will not need this fact.
\begin{proof}
We argue by contradiction as follows.
Consider $\T^2=\R^2/\Z^2$ endowed with the Riemannian metric induced by the Euclidean scalar product.  If $\abs{K} > 4$, fix $0<\lambda<1$ such that the symmetric convex body $K'=\lambda \cdot K\subset \text{int}(K)$ still satisfies $\abs{K'} > 4$. Then the homothetic symmetric convex body $K'/2$ would have Lebesgue measure strictly greater than $1$. Thus the universal covering map $\pi : \R^2 \to \T^2$ restricted to $K'/2$ cannot be injective, as in the contrary it would imply that $\abs{K'/2}=\vol(\pi(K'/2))\leq\vol(\T^2)=1$. Therefore there exist two points $\tilde{x}$ and $\tilde{x}+z$ with $z \in \Z^2\setminus\{0\}$ both belonging to $K'/2$. As $K'/2$ is symmetric, we get that $-\tilde{x}$ also belongs to $K'/2$, which ensures by convexity that 
$\frac{z}{2}=\frac{\tilde{x}+z-\tilde{x}}{2} \in K'/2 \Leftrightarrow z \in K'\subset\text{int}(K)$: a contradiction.
\end{proof}

Observe that the same proof actually works in arbitrary dimension $n\geq 2$, and gives the following version of Minkowski first theorem: {\it Let $K \subset \R^n$ be a symmetric convex body such that $\text{int}(K) \cap \Z^n = \set{0}$. Then its Lebesgue measure satisfies $\abs{K} \leq 2^n$.}

Let us now explain how this theorem translates into an optimal isosystolic inequality.
A symmetric convex body $K \subset \R^2$ corresponds\footnote{Namely set $\|v \|_{K}:=\inf\{t>0 \mid v \in tK\}$ for any $v\in \R^2$.} to a unique symmetric norm $\|\cdot \|_{K}$ on $\R^2$, which induces a unique $\Z^2$-periodic flat Finsler reversible metric $\tilde{F}_{K}$ on $\R^2$ by setting $\tilde{F}_{K}(\tilde{x},\tilde{v}):=\|\tilde{v}\|_{K}$. In this way to each convex body $K$ corresponds a unique flat Finsler reversible metric $F_K$ on the $2$-torus $\T^2$. Observe that given two points $\tilde{x}, \tilde{y} \in \R^2$, the length of any smooth curve $\gamma: [a,b] \rightarrow \R^2$ from $\tilde{x}$ to $\tilde{y}$ satisfies
\begin{equation*}
    \ell_{\tilde{F}_{K}}(\gamma) = \int_a^b \|\dot{\gamma}(t)\|_{K} dt \geq \left\|\int_a^b \dot{\gamma}(t) dt\right\|_{K} = \|\gamma(b) - \gamma(a)\|_{K} = \|\tilde{y}-\tilde{x}\|_{K}.
\end{equation*}
 Equality occurs when velocity is a constant vector, that is when $\gamma$ suitably parametrizes a line segment.
Now, any non-contractible closed curve of $\T^2$ lifts to $\R^2$ to a curve between two points $\tilde{x}$ and $\tilde{x}+z$ for some $z \in \Z^2\setminus\{0\}$. The length of such a curve is thus at least equal to  $\norm{z}_K$ from which we deduce that
\begin{equation*}
    \sys{(\T^2,F_K)}  = \min_{z \in \Z^2 \backslash \set{0}} \norm{z}_K.
\end{equation*}
Therefore 
\begin{align*}
        \text{int}(K) \cap \Z^2 = \set{0} &\Longleftrightarrow  \sys{(\T^2,F_K)} \geq 1,
    \end{align*}
    while using Definition \ref{def:area} we get that
\begin{align*}
        \abs{K} \leq 4 &\Longleftrightarrow \area_{BH}{(\T^2, F_K)} = \int_{[0,1]^2} \frac{\pi}{\abs{K}} d\tilde{x}_1 \wedge d\tilde{x}_2 = \frac{\pi}{\abs{K}} \geq \frac{\pi}{4}.
    \end{align*}
    
 As the systolic area remains invariant under rescaling the metric by any positive factor $\lambda$, and observing the fact that $\sys{(\T^2,\lambda F_K)} =\lambda     \sys{(\T^2,F_K)}$, we can reformulate Theorem \ref{th:firstMin} as the following optimal isosystolic inequality for Busemann-Hausdorff area and flat Finsler reversible metrics on the $2$-torus.

\begin{theorem}[Minkowski's first theorem, systolic formulation]\label{th:Mink}
Any flat Finsler reversible torus $(\T^2,F_K)$ satisfies the following optimal isosystolic inequality:
    \begin{equation*}
        \area_{BH}{(\T^2,F_K)} \geq \frac{\pi}{4} \sys^2{(\T^2,F_K)}.
    \end{equation*}
 Equality holds for the flat metric corresponding to the supremum norm $\|\cdot\|_\infty$.
 \end{theorem}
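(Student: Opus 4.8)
The plan is to deduce Theorem \ref{th:Mink} directly from Minkowski's first theorem (Theorem \ref{th:firstMin}) by unwinding the definitions for flat reversible Finsler metrics; the only real content is already contained in Theorem \ref{th:firstMin}, so what remains is a translation. Since the systolic $BH$-area is invariant under rescaling the metric, I may normalize and assume $\sys(\T^2,F_K) \geq 1$; it then suffices to prove $\area_{BH}(\T^2,F_K) \geq \pi/4$.

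First I would identify the geodesics of the flat metric $\tilde F_K$ on $\R^2$. For a piecewise smooth curve $\gamma:[a,b]\to\R^2$ joining $\tilde x$ to $\tilde y$, the triangle inequality for the norm $\|\cdot\|_K$ in integral form gives $\ell_{\tilde F_K}(\gamma)=\int_a^b\|\dot\gamma(t)\|_K\,dt\geq\|\int_a^b\dot\gamma(t)\,dt\|_K=\|\tilde y-\tilde x\|_K$, with equality when $\dot\gamma$ is a constant vector, i.e. when $\gamma$ parametrizes the segment $[\tilde x,\tilde y]$. Hence $d_{\tilde F_K}(\tilde x,\tilde y)=\|\tilde y-\tilde x\|_K$, and since $\tilde F_K$ is $\Z^2$-periodic, the systole formula on the universal cover yields $\sys(\T^2,F_K)=\min\{\|z\|_K \mid z\in\Z^2\setminus\{0\}\}$.

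Next I would match the two sides of Minkowski's inequality with the two geometric quantities. The condition $\sys(\T^2,F_K)\geq 1$ reads $\|z\|_K\geq 1$ for every $z\in\Z^2\setminus\{0\}$, which is exactly the statement that $\mathrm{int}(K)\cap\Z^2=\{0\}$ (here $K$ is the unit ball of $\|\cdot\|_K$, so $z\in\mathrm{int}(K)$ iff $\|z\|_K<1$). On the area side, since $F_K$ is flat we have $\widetilde K_x=K$ for all $x$, so Definition \ref{def:area} gives $\area_{BH}(\T^2,F_K)=\int_{[0,1]^2}\frac{\pi}{|K|}\,d\tilde x_1\wedge d\tilde x_2=\frac{\pi}{|K|}$. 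Now Theorem \ref{th:firstMin} applied to the symmetric convex body $K$ gives $|K|\leq 4$, hence $\area_{BH}(\T^2,F_K)=\pi/|K|\geq\pi/4$, which is the claimed inequality after undoing the normalization.

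Finally I would remark that the constant $\pi/4$ is optimal: taking $K=[-1,1]^2$ gives a flat Finsler reversible torus with $\sys=1$ (the shortest lattice vectors $(\pm1,0),(0,\pm1)$ lie on $\partial K$) and $|K|=4$, so $\area_{BH}=\pi/4$, realizing equality. There is essentially no obstacle here beyond being careful that $K$ symmetric implies $\|\cdot\|_K$ is a genuine (symmetric) norm so that the integral triangle inequality above applies, and that the infimum defining the systole is attained by a shortest nonzero lattice vector; both are immediate.
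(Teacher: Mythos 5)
Your proposal is correct and follows essentially the same route as the paper: normalize the systole, identify the geodesics of $\tilde F_K$ as straight segments via the integral triangle inequality so that $\sys(\T^2,F_K)=\min_{z\in\Z^2\setminus\{0\}}\|z\|_K$, translate $\sys\geq 1$ into $\mathrm{int}(K)\cap\Z^2=\{0\}$ and $\area_{BH}(\T^2,F_K)=\pi/|K|$, and then apply Theorem \ref{th:firstMin}. The optimality remark via the square $K=[-1,1]^2$ likewise matches the paper's observation on the equality case.
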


\subsection{Busemann-Hausdorff area in the flat non-reversible case: systolic freedom}\label{sec:sysfree}

It is well known that Minkowski first theorem no longer holds if we relax the symmetry assumption on the convex body. Equivalently, this means that there does not exist an isosystolic inequality on the $2$-torus for the Busemann-Hausdorff area and flat Finsler (possibly non-reversible) metrics. 

More especifically, consider for every $\varepsilon \in (0,1)$ the convex body $K_\varepsilon\subset \R^2$    in Figure \ref{fig:rhombus}  
        defined as the convex hull of the four vertices
$(0,1), (\tfrac{1+\varepsilon}{2\varepsilon},\tfrac{1-\varepsilon}{2}), (0,-\varepsilon)$ and $(-\tfrac{1+\varepsilon}{2\varepsilon}, \tfrac{1-\varepsilon}{2})$.

\begin{figure}[h!]
\includegraphics[scale=1]{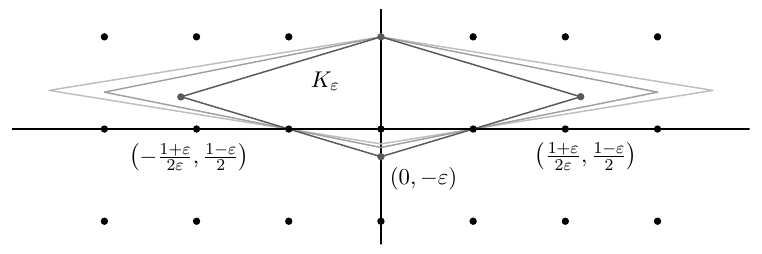}
      \caption{The convex body  $K_\varepsilon$.}
      \label{fig:rhombus}
\end{figure}

This convex body defines a flat Finsler metric $F_{K_\varepsilon}$ on $\T^2$ which is not reversible.
We easily check that $\sys(\T^2,F_{K_\varepsilon})=1$ and
$\abs{K_\varepsilon} = (1+\varepsilon)^2/(2\varepsilon).$
Therefore  its Busemann-Hausdorff systolic area is not bounded from below:
$$
\frac{\area_{BH}(\T^2,F_{K_\varepsilon})}{\sys^2(\T^2,F_{K_\varepsilon})} = \frac{2\pi\varepsilon}{(1+\varepsilon)^2} \xrightarrow[]{\varepsilon\to 0} 0.
$$

\subsection{Holmes-Thompson area in the flat reversible case}
We now focus on Holmes-Thompson notion of area in the reversible case. First recall the following optimal inequality.

\begin{theorem}\cite{Mahler2}\label{th:Mahler}
Given a symmetric convex body $K\subset \R^2$, the following inequality holds true:
$$
|K|\cdot |K^\circ|\geq 8.
$$
 Equality holds when $K$ is the unit disc of the supremum norm $\|\cdot\|_\infty$.
\end{theorem}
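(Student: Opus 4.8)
The plan is to exploit the $\mathrm{GL}_2(\R)$-invariance of the functional $K\mapsto|K|\cdot|K^\circ|$ to put $K$ in a normalized position, and then run a quadrant-by-quadrant area comparison between $K$ and $K^\circ$. For the normalization: for $T\in\mathrm{GL}_2(\R)$ one has $|TK|=|\det T|\,|K|$ and $|(TK)^\circ|=|\det T|^{-1}|K^\circ|$, so $|K|\cdot|K^\circ|$ is invariant under linear changes of coordinates. Among all origin-symmetric parallelograms $\conv\{\pm u,\pm v\}$ with $u,v\in\partial K$, choose one of maximal area (it exists by compactness and is non-degenerate since $K$ has interior). After a linear map we may assume it is $P:=\conv\{\pm e_1,\pm e_2\}$, of area $2$. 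Maximality then forces $|w_1|\le1$ and $|w_2|\le1$ for every $w\in K$, since otherwise $\conv\{\pm w,\pm e_2\}$ or $\conv\{\pm e_1,\pm w\}$ would be a strictly larger inscribed parallelogram. Hence $P\subseteq K\subseteq Q$ with $Q:=[-1,1]^2$, and as $P^\circ=Q$ and $Q^\circ=P$ the same sandwich holds for $K^\circ$. In particular $\pm e_1,\pm e_2$ lie in both $K$ and $K^\circ$.

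Next, the area-in-a-quadrant bound. Set $a_+:=\max_{w\in K}(w_1+w_2)$ and $a_-:=\max_{w\in K}(w_1-w_2)$; both lie in $[1,2]$ because $e_1\in K\subseteq Q$. Using $K\subseteq Q$ one checks that a point $p\in K$ achieving $a_+$ lies in the closed first quadrant, so $K$ contains the convex quadrilateral $\conv\{0,e_1,p,e_2\}$, whose area equals $\tfrac12(p_1+p_2)=\tfrac12 a_+$; an analogous quadrilateral in the fourth quadrant has area $\tfrac12 a_-$. By central symmetry,
\[
|K|\;\ge\;2\cdot\tfrac12 a_+ + 2\cdot\tfrac12 a_-\;=\;a_+ + a_-.
\]
Applying the very same estimate to $K^\circ$ gives $|K^\circ|\ge \max_{z\in K^\circ}(z_1+z_2)+\max_{z\in K^\circ}(z_1-z_2)$. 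Since the support function of $K^\circ$ is the Minkowski gauge $\|\cdot\|_K$ of $K$, we have $\max_{z\in K^\circ}(z_1+z_2)=\|(1,1)\|_K=1/b_+$ where $b_+:=\max\{t>0:(t,t)\in K\}$, and likewise $\max_{z\in K^\circ}(z_1-z_2)=1/b_-$ with $b_-:=\max\{t>0:(t,-t)\in K\}$; note $b_\pm\ge\tfrac12$ as $P\subseteq K$. Thus $|K^\circ|\ge 1/b_+ + 1/b_-$.

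To conclude: from $(b_+,b_+)\in K$ we get $a_+\ge 2b_+$, hence $1/b_+\ge 2/a_+$, and similarly $1/b_-\ge 2/a_-$. Therefore, using $a_+,a_->0$ and the arithmetic--geometric mean inequality,
\[
|K|\cdot|K^\circ|\;\ge\;(a_++a_-)\Big(\frac{2}{a_+}+\frac{2}{a_-}\Big)\;=\;4+2\Big(\frac{a_+}{a_-}+\frac{a_-}{a_+}\Big)\;\ge\;8 .
\]

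The crux is the quadrant area bound: verifying that the support-achieving point $p$ really sits in the appropriate quadrant, that $\conv\{0,e_1,p,e_2\}\subseteq K$ is a genuine convex quadrilateral of area $\tfrac12 a_+$, and --- the feature that lets the two bodies interact --- that running the \emph{same} inequality on $K^\circ$ reintroduces the radial widths $b_\pm$ of $K$ via the duality $h_{K^\circ}=\|\cdot\|_K$. Everything after that is the elementary estimate $a_\pm\ge 2b_\pm$ and AM--GM. Finally, chasing equalities ($a_+=a_-$, the quadrant regions are exactly these quadrilaterals, $2b_\pm=a_\pm$) forces $K$ to be a parallelogram, which recovers the optimality asserted in the statement, although the bare inequality does not need it.
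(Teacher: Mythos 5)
Your proof is correct, and it takes a genuinely different route from the one in the paper. The paper follows Mahler's original scheme: approximate $K$ by symmetric polygons (using continuity of $K\mapsto\abs{K}\cdot\abs{K^\circ}$ in the Hausdorff topology), then repeatedly remove a pair of opposite vertices by sliding them parallel to the line through their neighbours so that $\abs{P}$ is preserved while $\abs{P^\circ}$ does not increase, until only a parallelogram is left, where the product equals $8$; the technical verification of that deformation step is delegated to a reference. Your argument is instead direct and self-contained: after normalizing a maximal-area inscribed parallelogram to $P=\conv\set{\pm e_1,\pm e_2}$, the sandwich $P\subseteq K\subseteq[-1,1]^2$ (and hence the same sandwich for $K^\circ$, since $P$ and the square are mutually polar) lets you bound $\abs{K}\geq a_++a_-$ by the eight quadrant triangles, bound $\abs{K^\circ}\geq 1/b_++1/b_-\geq 2/a_++2/a_-$ via the identity $h_{K^\circ}=\|\cdot\|_K$ and $a_\pm\geq 2b_\pm$, and finish with AM--GM; I checked the key points (the maximizer of $w_1+w_2$ does land in the closed first quadrant because $a_+\geq 1$ and $K\subseteq[-1,1]^2$, the two triangles per quadrant overlap only in measure zero, and the duality step is the standard support--gauge identity), and they all hold. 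What each approach buys: yours avoids polygon approximation and the unproved deformation lemma, so it is fully elementary and quantitative; the paper's reduction is more structural and yields the equality case (parallelograms) as a by-product, whereas your equality discussion at the end is only a sketch --- which is fine, since the stated theorem is only the inequality. One cosmetic point: when you invoke maximality against a point $w\in K$ with $\abs{w_1}>1$, the competitor $\conv\set{\pm w,\pm e_2}$ may have $\pm w$ off $\partial K$; either take the maximum over all $u,v\in K$ from the start, or push $w$ radially to the boundary, which only increases the area --- a one-line fix.
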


\begin{proof}[Sketch of proof.]
	We briefly describe here the strategy behind the proof of Mahler's inequality, and we refer to \cite{henze} for technical details. Any symmetric convex body can be approximated in the Hausdorff topology by a sequence of symmetric polygons. Since the product area $K \mapsto |K|\cdot |K^\circ|$ is continuous on $\mathcal{K}_0(\R^2)$, it suffices to prove the inequality for symmetric polygons. 
	
	It can be shown that given a symmetric polygon $P$ with $m \geq 3$ pairs of opposite vertices, one can construct a symmetric polygon $Q$ with $m-1$ pairs of opposite points such that $\abs{Q} \cdot \abs{Q^\circ} \leq \abs{P} \cdot \abs{P^\circ}$. Namely, set $P=\text{conv}\{\pm v_1,\ldots,\pm v_m\}$ and fix three adjacent vertices $v_{i-1},v_i$ and $v_{i+1}$ of $P$ whose convex hull does not contain the origin (using a cyclic notation). 
	Consider the line $L$ through $v_i$ and parallel to the line $(v_{i-1} \, \,v_{i+1})$. Moving continuously $v_i$ along $L$ preserves the volume of $P$, while convexity is ensured until $v_i$ reaches one of the two intersection points of $L$ with the line $(v_{i-2} \, \,v_{i-1})$ or $(v_{i+1}\, \, v_{i+2})$. By convexity, it can be shown that $\abs{P^\circ}$ is precisely minimal at one of these two points where the deformed polygon $P$ becomes a new polygon $Q$ with $m-1$ pairs of opposite vertices. Applying this result successively, one can reduce any symmetric polygon to a centered parallelogram while decreasing the product area. For such a parallelogram it is easy to check that area product is equal to $8$.
\end{proof}

Mahler also conjectured in \cite{mahler1} that in arbitrary dimension $n\geq 2$ the volume product of a symmetric convex body $K\subset \R^n$ satisfies the following lower bound:
$$
|K|\cdot |K^\circ|\geq \frac{4^n}{n!}.
$$
Mahler's conjecture has been recently proved in dimension $3$ by \cite{IS20}. In arbitrary dimension, the best known lower bound is due to \cite{kuperberg} who proved that
$$
|K|\cdot |K^\circ|\geq  \frac{\pi^n}{n!}.
$$
By combining Minkowski's first theorem together with Mahler's theorem, we obtain the following optimal isosystolic inequality for Holmes-Thompson area and flat Finsler reversible metrics on the $2$-torus.

\begin{theorem}\label{th:HT_reversible}
Any flat reversible Finsler torus $(\T^2,F_K)$ satisfies the following optimal isosystolic inequality:
    \begin{equation*}
        \area_{HT}{(\T^2,F_K)} \geq \frac{2}{\pi} \sys^2{(\T^2,F_K)}.
    \end{equation*}
 Equality holds for the flat metric corresponding to the supremum norm $\|\cdot\|_\infty$. \end{theorem}

\begin{proof}
Rescaling the metric if necessary, we can suppose that $\sys{(\T^2,F_K)}=1$. Now Minkowski first Theorem \ref{th:firstMin} ensures that $|K|\leq 4$ which together with Mahler's Theorem \ref{th:Mahler} implies that 
$$
 |K^\circ|\geq 2 \Leftrightarrow  \area_{HT}{(\T^2,F_K)}\geq \frac{2}{\pi}.
$$
\end{proof}

\subsection{Holmes-Thompson area in the flat non-reversible case}
We now present the optimal isosystolic inequality for Holmes-Thompson area and flat Finsler metrics on the two-torus obtained in \cite{ABT}.

\begin{theorem}\label{th:HTnonrev}
    \label{th:ABT}
Any flat Finsler torus $(\T^2,F_K)$ satisfies the following optimal isosystolic inequality:
    \begin{equation*}
        \area_{HT}{(\T^2,F_K)} \geq \frac{3}{2\pi} \sys^2{(\T^2,F_K)}.
    \end{equation*}
    Equality holds for the flat metric corresponding to the norm on $\R^2$ whose unit disc is the triangle with vertices $(1,0)$, $(0,1)$ and $(-1,-1)$.
\end{theorem}

\begin{proof}[Sketch of proof.]
We present here a short version of the proof, focusing on the main geometric ideas and avoiding several technical considerations.

First we bring the above isosystolic inequality into the world of the geometry of numbers as follows. As it is enough to show that if  $\sys{(\T^2,F_K)}\geq 1$ then $\area_{HT}{(\T^2,F_K)} \geq \frac{3}{2\pi}$, we have to prove that for a convex body $K\subset \R^2$ the condition $\text{int}(K) \cap \Z^2 = \set{0}$ ensures that $|K^\circ|\geq 3/2$. Now observe that  $\text{int}(K) \cap \Z^2 = \set{0}$  if and only if every integer line $m_1\tilde{x}_1+m_2\tilde{x}_2=1$ where $(m_1,m_2)\in \Z^2\setminus\{0\}$ intersects $K^\circ$.

\begin{figure}[h!]
\includegraphics[scale=0.6]{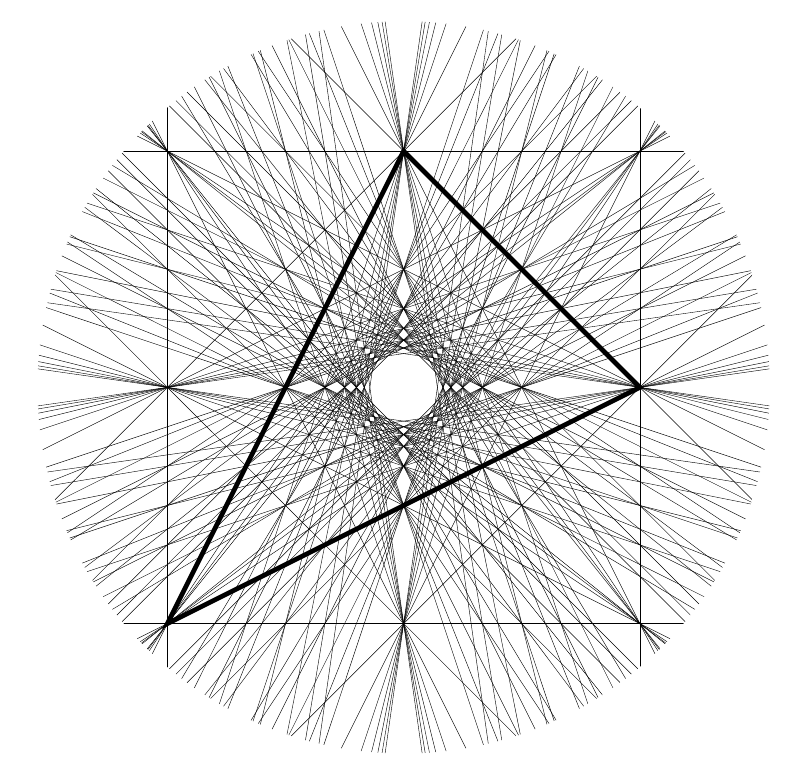}
      \caption{Set of integer lines     not parallel to the axes} for $m_1^2+m_2^2\leq 50$ and a triangle with minimal area.
      \label{integerlines}
       \end{figure}

So we are left to prove the following assertion: {\it if a convex body $Q\subset \R^2$ intersects every integer line, then its Lebesgue measure satisfies $\abs{Q}\geq 3/2$}.
If you wonder how this set of integer lines looks like, see Figure \ref{integerlines}. The convex body whose boundary is the bold triangle with vertices $(1,0)$, $(0,1)$ and $(-1,-1)$ in Figure \ref{integerlines} has area $3/2$ and intersects every integer line, showing that the isosystolic inequality in Theorem \ref{th:HTnonrev} is optimal.

Now we argue as follows. By approximation of convex bodies by polygons in the Hausdorff topology, it is enough to prove the assertion for a convex polygon $Q$ whose vertices will be denoted by $v_1,\ldots,v_n$. Remark that necessarily $n\geq 3$. We will argue  by induction on the number $n$ of vertices.  

If some vertex $v_i$ does not lie on an integer line supporting $Q$, fix any supporting line $\ell$ passing through $v_i$. 
There exists along $\ell$ at least one direction in which moving the vertex $v_i$ does not increase the area of the deformed polygon, see Figure \ref{fig:move}. Suppose that the correct direction corresponds to the adjacent vertex $v_{i-1}$. 
We can push\footnote{The crucial point here is that the set of integer lines not intersecting a small closed disc around the origin is always finite, so the set of integer lines that are candidates to appear as a new supporting line for the deformed polygon is also always finite.}  the vertex $v_i $ along $\ell$ in this direction until either $v_i$ meets for the first time an integer line supporting the deformed polygon, or $v_i$ meets the line $(v_{i-2},v_{i-1})$ and thus $v_{i-1}$ disappears as a vertex.
This deformation does not increase the area of the polygon while preserving convexity and the property of intersecting every integer line.
Applying this process to each vertex that is not contained in an integer line supporting $Q$, we deform our original  convex polygon into a new one (still denoted by $Q$) with at most the same area, that still meets every integer line, and such that every vertex is contained in at least one supporting integer line. The number of vertices may have decreased during the process.

\begin{figure}[h!]
\includegraphics[scale=1]{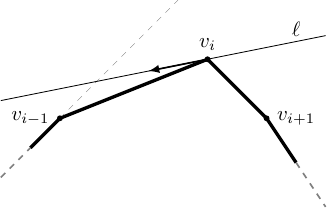}
      \caption{Deformation of the convex polygon.}
      \label{fig:move}
\end{figure}

Now suppose that a vertex $v_i$ is contained in exactly one integer line $\ell$ supporting $Q$. We argue exactly as above. More precisely, we push the vertex $v_i$ along $\ell$ in a direction that ensures the area does not increase, and stop either when the vertex $v_i$ meets another integer line supporting $Q$, or when one of the vertices adjacent to $v_i$ disappears as a vertex of $Q$.
Applying this process to each vertex contained in exactly one integer line supporting $Q$, we deform the convex polygon obtained in the previous step into a new one  (still denoted by $Q$) with at most the same area, that still meets every integer line, and such that every vertex is contained in at least two distinct supporting integer line. The number of vertices may have decreased during the process.
        
We now show that it implies that every vertex of our new convex polygon still denoted by $Q$ actually belongs to $\Z^2$. For this, fix a vertex $v$ contained in at least two distinct integer lines. As each integer line is written as a set of the form $\{\langle \xi,\cdot\rangle=1\}$ for some $\xi \in \Z^2$, we can find in particular two supporting integer lines $L_1,L_2$ containing $v$  whose associated lattice points $\xi_1,\xi_2$ are such that the interior of the segment $[\xi_1,\xi_2]$ does not intersect $\Z^2$. Both $\xi_1$ and $\xi_2$ automatically belong to the boundary of $Q^\circ$, and therefore the triangle formed by the origin together with these two integral points  does not contain any other integral points than its vertices as $\text{int}(Q^\circ)\cap \Z^2=0$ by assumption. It ensures that $(\xi_1,\xi_2)$ forms a basis of $\Z^2$, and thus the associated matrix $A$ with row vectors $\xi_1$ and $\xi_2$ belongs to $SL_2(\Z)$. Next observe that $v=(x_1,x_2)$ is the unique solution of the equation $Av^t=(1,1)^t$. Thus $v=A^{-1}(1,1)^t\in \Z^2$.

To finish the proof, we apply Pick's formula \cite{Pick} that asserts that for a convex polygon $Q$ whose vertices belong to $\Z^2$, if we denote by $i$ the number of integer points contained in its interior, and by $b$ the number of integer points contained in its boundary, the area of the polygon satisfies $|Q|=i+b/2-1$. In our case,  as the origin of the plane belongs to the interior of our polygon which has at least $3$ integer vertices, we have $i\geq 1$ and $b\geq 3$ which ensures that $|Q|\geq 3/2$.

The triangle with vertices $(-1,-1)$, $(0,1)$ and $(1,0)$ pictured in Figure \ref{integerlines}  has area $3/2$. It also intersects every integer line as its polar is the triangle with vertices $(1,1)$, $(1,-2)$ and $(-2,1)$ whose interior does not contain other integer points than the origin. It ensures the optimality of our assertion, and therefore of the corresponding isosystolic inequality.
\end{proof}

\section{Asymptotic geometry in the universal cover of a Finsler $2$-torus}\label{sec:stable_norm}

In sections \ref{sec:HT} and \ref{sec:BH}, we will prove several optimal isosystolic inequalities on (non-flat) Finsler $2$-tori. The same strategy applies to all these proofs and is similar to the Riemannian case: find a flat metric whose systolic area is smaller than the original metric, and then apply the corresponding flat optimal systolic inequality already proved in section \ref{sec:flatFinsler}. The flat metric here will be defined using the asymptotic geometry of the universal covering space of our Finsler torus, and is known as the {\it stable norm}. In this section we first present this notion in subsection \ref{sec:stab} and prove that passing for a $2$-torus from a Finsler metric to its stable norm does not change the systole, see Proposition \ref{prop:sys_stab}. Therefore the proof of these optimal isosystolic inequalities will reduce to prove that passing from a Finsler metric to its stable norm decreases the corresponding notion of area. For this we will need a technical tool introduced in \cite{burago_ivanov} and called {\it calibrating functions}. These functions---defined in analogy with Busemann functions---are presented in subsection \ref{sec:cal} together with the main properties we will need in the forthcoming sections. \\

\subsection{Stable norm}\label{sec:stab}

We closely follow here the presentation used in \cite{burago_ivanov}. 
We refer  to \cite{Fed74} the reader interested in the various alternative definitions of the stable norm and their equivalence. Fix a Finsler metric $F$ on $\T^2$ inducing a $\Z^2$-periodic Finsler metric $\tilde{F}$ on its universal cover $\R^2$, and a point $\tilde{x}_0 \in \R^2$.
Then for any $z \in \Z^2$ set
\begin{equation}\label{def:stab}
\|z\|^F_{st}:=\lim_{k\to \infty} \frac{d_{\tilde{F}}(\tilde{x}_0,\tilde{x}_0+kz)}{k}.
\end{equation}
It is well known that this limit exists, does not depend on the choice of $\tilde{x}_0$, and that the function $\|\cdot\|_{st}^F$ extends to a norm on $\R^2$ called the {\it stable norm}.
The bounded distance theorem (see \cite{burago}) states that the stable norm of $F$ turns out to be the unique norm on $\R^2$ such that there exists a constant $C$ for which
\begin{equation}\label{eq:st}
\|z\|^F_{st}\leq d_{\tilde{F}}(\tilde{x},\tilde{x}+z)\leq \|z\|^F_{st}+C
\end{equation}
for every $z\in \Z^2$ and any $\tilde{x} \in \R^2$.
So, informally speaking, the stable norm naturally appears by looking at the distance function on the universal cover of the Finsler torus at a large scale.
By passing to the quotient, the stable norm induces a Finsler flat metric on $\T^2$ still called stable norm and denoted by $\|\cdot\|_{st}^F$. It is worth noting that $\|\cdot\|_{st}^{F}=F$ if and only if $F$ is flat. 

The following result will be of fundamental importance to us:
\begin{proposition}\label{prop:sys_stab}
$\sys(\T^2,F)=\sys(\T^2,\|\cdot\|^F_{st})$.
\end{proposition}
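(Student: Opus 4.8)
The plan is to observe that both sides of the claimed equality reduce to exactly the same double minimum over the integer lattice and over base points, so the proposition is really a matter of unwinding the two definitions rather than of any substantial geometric argument.

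First I would recall the formula for the systole read on the universal cover, stated just after the definition of the systole: $\sys(\T^2,F)=\min\{d_{\tilde{F}}(\tilde{x},\tilde{x}+z)\mid \tilde{x}\in[0,1]^2,\ z\in\Z^2\setminus\{0\}\}$. Hence the left-hand side is
\[
\sys(\T^2,F)=\min_{z\in\Z^2\setminus\{0\}}\ \min_{\tilde{x}\in[0,1]^2} d_{\tilde{F}}(\tilde{x},\tilde{x}+z).
\]

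Second, I would compute the right-hand side. Since $\|\cdot\|^F_{st}$ induces a flat Finsler metric on $\T^2$, the reasoning used in the reversible flat case leading to Theorem \ref{th:Mink} applies verbatim: for a norm $\|\cdot\|$ and any curve $\gamma:[a,b]\to\R^2$ one has the elementary estimate $\ell(\gamma)=\int_a^b\|\dot{\gamma}\|\,dt\geq\|\gamma(b)-\gamma(a)\|$, and crucially this uses only the triangle inequality and positive homogeneity of the norm, not its symmetry, so it remains valid for the possibly non-symmetric stable norm. Consequently the geodesics of the flat metric $\|\cdot\|^F_{st}$ are straight segments, every non-contractible closed curve lifts to a path from some $\tilde{x}$ to $\tilde{x}+z$ with $z\in\Z^2\setminus\{0\}$ of $\|\cdot\|^F_{st}$-length at least $\|z\|^F_{st}$ (with equality realized by the segment, independently of the base point by translation invariance of a flat metric). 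Therefore $\sys(\T^2,\|\cdot\|^F_{st})=\min_{z\in\Z^2\setminus\{0\}}\|z\|^F_{st}$.

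Finally I would substitute the definition of the stable norm itself. By definition $\|z\|^F_{st}=\inf_{\tilde{x}\in\R^2} d_{\tilde{F}}(\tilde{x},\tilde{x}+z)$, and by the $\Z^2$-periodicity of $\tilde{F}$ this infimum is attained and equals $\min_{\tilde{x}\in[0,1]^2} d_{\tilde{F}}(\tilde{x},\tilde{x}+z)$ (this is exactly property (1) of the proposition on $\|\cdot\|^F_{st}:\Z^2\to\R_+$). Plugging this in yields
\[
\sys(\T^2,\|\cdot\|^F_{st})=\min_{z\in\Z^2\setminus\{0\}}\ \min_{\tilde{x}\in[0,1]^2} d_{\tilde{F}}(\tilde{x},\tilde{x}+z),
\]
which is literally the expression obtained for $\sys(\T^2,F)$ in the first step, establishing the equality. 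As for the main difficulty: there is no deep obstacle, the content of the statement being precisely the observation that the systole of the stable-norm flat metric is computed by the very quantities $\|z\|^F_{st}$ out of which that norm was built. The only point requiring a moment of care is checking that the flat-systole formula $\sys=\min_z\|z\|^F_{st}$ survives the loss of symmetry of the stable norm, which is handled by noting that the length-versus-displacement inequality above never invoked reversibility.
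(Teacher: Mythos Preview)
Your proof is correct and follows essentially the same route as the paper's: both reduce each side to the double minimum $\min_{z\in\Z^2\setminus\{0\}}\min_{\tilde{x}\in[0,1]^2} d_{\tilde{F}}(\tilde{x},\tilde{x}+z)$ via the universal-cover systole formula and the identity $\|z\|^F_{st}=\min_{\tilde{x}\in[0,1]^2} d_{\tilde{F}}(\tilde{x},\tilde{x}+z)$. Your version is simply more explicit, in particular in justifying that the flat systole formula $\sys(\T^2,\|\cdot\|^F_{st})=\min_{z}\|z\|^F_{st}$ remains valid without reversibility, a point the paper leaves implicit.
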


\begin{proof}
Since $\|\cdot\|_{st}^F$ depends continously on $F$ (see \cite{burago_ivanov}), we may assume that $F:T\T^2\to \R$ is smooth outside the zero section and quadratically convex (that is, the second derivatives of $F^2_{|T_x\T^2\setminus\{0\}}$ are positive definite for all $x\in \T^2$). 
Then, according to Lemma 4.32 in \cite[p.260]{Gromov} which still holds for smooth and quadratically convex Finsler metrics, if a closed curve is length minimizing in its homotopy class, its iterates are also minimizing in their respective homotopy classes. Therefore
$$
\|z\|^F_{st}=\min_{\tilde{x}\in \Delta} d_{\tilde{F}}(\tilde{x},\tilde{x}+z)
$$ 
for any fundamental domain $\Delta$ in $\R^2$  for the $\Z^2$-action, and the conclusion easily follows.
\end{proof}

\subsection{Dual stable norm and calibrating functions}\label{sec:cal}

We now present the notion of calibrating functions. All the material of this subsection can be found  in \cite{burago_ivanov}. From now on, we assume the Finsler metric $F$ to be smooth and quadratically convex, and we fix an arbitrary point  $\tilde{x}_0 \in \R^2$. Denote by $\|\cdot\|_{st}$ its stable norm and by $\|\cdot\|_{\tilde{x}}$ the norm defined on each tangent space $T_{\tilde{x}}\R^2\simeq \R^2$ by the Finsler metric $\tilde{F}$.  For a linear form $h:\R^2\to\R$, we define its dual stable norm  by $\|h\|_{st}^\ast:=\max\{h(v)\mid \|v\|_{st}\leq 1\}$ and its dual Finsler norm at $\tilde{x}$ by $\|h\|_{\tilde{x}}^\ast:=\max\{h(v)\mid \|v\|_{\tilde{x}}\leq 1\}$.
Define by
$$
K^\ast_{\tilde{x}}:=\{h \in (\R^2)^\ast \mid \|h\|^\ast_{\tilde{x}}\leq1\} \, \, \text{and} \, \, 
B_{st}^\ast:=\{h \in (\R^2)^\ast \mid \|h\|^\ast_{st}\leq1\}
$$
the corresponding dual unit balls. 

\begin{lemma}\label{lem:cal}
Let $h\in \partial B^\ast_{st}$. The function
\begin{equation*}
	f(\tilde{x}): = \limsup_{\Z^2 \ni z\to\infty} \left[h(z) - d_{\tilde{F}}(\tilde{x},\tilde{x}_0 +z)\right]
\end{equation*}
is well defined and satisfies the following properties:
 
 \begin{enumerate}
		\item $f(\tilde{x}+z) = f(\tilde{x}) + h(z)$ for all $\tilde{x} \in \R^2$ and $z \in \Z^2$.
        \item $d_{\tilde{x}} f$ is defined for almost every point $\tilde{x} \in \R^2$ and satisfies $\|d_{\tilde{x}} f\|_{\tilde{x}}^\ast=1$.
\end{enumerate}
\end{lemma}

Such a function is an example of calibrating function for $h$ (see \cite{burago_ivanov} for a precise definition) and is defined in analogy with Busemann functions.

\begin{proof}[Proof of the Lemma \ref{lem:cal}]
Using the fact that $h(z)\leq \|z\|_{st}$ as $\|h\|_{st}^\ast=1$, and formula (\ref{eq:st}), we see that $f(\tilde{x})\leq d_{\tilde{F}}(\tilde{x},\tilde{x}_0)<+\infty$. Besides, we can always find a sequence $\{z_i\}$ of points in  $\Z^2$ such that $z_i \to \infty$ and $h(z_i)\geq \|z_i\|_{st}-c$ for some constant $c$. For this, first observe that since $\|h\|_{st}^\ast=1$ we can find a vector $v\in \partial B_{st}$ such that $h(v)=1$. Then, consider the fundamental domain $\Delta:=[0,1]^2\subset\R^2$  for the $\Z^2$-action, and denote by $\{\Delta_i:=\Delta+z_i\}$ the sequence of fundamental domains translated by some element $z_i\in \Z^2$ successively intersecting the ray $\{tv\mid t>0\}$. It is easy to see that the sequence $\{z_i\}$ is suitable. Now, using (\ref{eq:st}) again, we see that $f(\tilde{x})\geq -d(\tilde{x},\tilde{x}_0)-C-c>-\infty$. So $f$ always takes finite values and is therefore well defined.

We easily check that	
\begin{eqnarray*}
f(\tilde{x}+z)& =& \limsup_{\Z^2 \ni z'\to\infty} \left[h(z') - d_{\tilde{F}}(\tilde{x}+z,\tilde{x}_0 +z')\right]\\
&=& \limsup_{\Z^2 \ni z'\to\infty} \left[h(z'+z) - d_{\tilde{F}}(\tilde{x}+z,\tilde{x}_0 +z'+z)\right]\\
&=& f(\tilde{x})+h(z),
\end{eqnarray*}
so property (1) holds.

Next observe that $f$ is $1$-Lipschitz  with respect to the Finsler metric $\tilde{F}$ as an upper limit of $1$-Lipschitz functions:
$$\abs{f(\tilde{x})-f(\tilde{y})} \leq d_{\tilde{F}}(\tilde{x},\tilde{y})$$ for all $\tilde{x},\tilde{y} \in \R^2$.
So its differential $d_{\tilde{x}} f$ is defined for almost every point $\tilde{x} \in \R^2$ and satisfies $\|d_{\tilde{x}} f\|_{\tilde{x}}^\ast\leq1$. In order to prove the reverse inequality, we will prove that for every $\tilde{x} \in \R^2$ there is a geodesic ray $\eta: [0,\infty) \rightarrow (\R^2,\tilde{F})$ with origin $\eta(0) = \tilde{x}$ satisfying $f(\eta(t)) \geq f(\tilde{x}) + t$.
Indeed fix the point $\tilde{x}\in \R^2$ and choose a sequence $z_i\to \infty$ of points in  $\Z^2$ such that 
$$
	f(\tilde{x}) = \lim_{i\to\infty} \left[h(z_i) - d_{\tilde{F}}(\tilde{x},\tilde{x}_0 +z_i)\right].
$$
Denote by $\eta_i$ a minimal geodesic path going from $\tilde{x}$ to $\tilde{x}_0 +z_i$ parametrized by arc length. By compactness, we can find a converging subsequence $\{\eta_i'(0)\}\subset S_{\tilde{x}}$ to some vector $v\in S_{\tilde{x}}$. This defines a geodesic ray $\eta$ starting at $\tilde{x}$ by setting $\eta'(0)=v$. Now
\begin{eqnarray*}
f(\eta(t))&=&\limsup_{\Z^2 \ni z\to\infty} \left[h(z) - d_{\tilde{F}}(\eta(t),\tilde{x}_0 +z)\right]\\
&\geq&\limsup_{i\to\infty} \left[h(z_i) - d_{\tilde{F}}(\eta(t),\tilde{x}_0 +z_i)\right]\\
\text{(by pointwise convergence)}&=&\limsup_{i\to\infty} \left[h(z_i) - d_{\tilde{F}}(\eta_i(t),\tilde{x}_0 +z_i)\right]\\
(\eta_i \, \text{minimal geodesic)}&=&\limsup_{i\to\infty} \left[h(z_i) - (d_{\tilde{F}}(\eta_i(0),\tilde{x}_0 +z_i)-d_{\tilde{F}}(\eta_i(0),\eta_i(t)))\right]\\
&=&\limsup_{i\to\infty} \left[h(z_i) - d_{\tilde{F}}(\tilde{x},\tilde{x}_0 +z_i)\right]+t\\
&=&f(\tilde{x})+t.
\end{eqnarray*}
Therefore $f(\eta(t)) \geq f(\tilde{x}) + t$ which implies the reverse inequality: $\|d_{\tilde{x}} f\|_{\tilde{x}}^\ast\geq1$. So $\|d_{\tilde{x}} f\|_{\tilde{x}}^\ast=1$ and property (2) is proved.

Finally, although we will not need this fact,  observe that $f(\eta(t))-f(\tilde{x})=f(\eta(t))-f(\eta(0))\leq t$ as $f$ is $1$-Lipschitz and $\eta$ is a geodesic parametrized by arc length, so the above inequality is in fact an equality: $f(\eta(t)) = f(\tilde{x}) + t$.
\end{proof}

\begin{remark}
    Notice that the $h(\Z^2)$-equivariance of the calibrating function $f$ implies that its differential $d_{\tilde{x}} f$ is $\Z^2$-periodic, and hence induces a $1$-form on the quotient $\T^2 = \R^2 / \Z^2$, that we abusively denote by $d_x f$, where $x = \pi(\tilde{x})$.
\end{remark}

The following technical result will be needed to prove that passing from a Finsler metric to its stable norm decreases the Holmes-Thompson area in the general case, and the Busemann-Hausdorff area in the reversible case.

\begin{lemma} \label{le:integral_forms}
Let $h,h' \in \partial B^\ast_{st}$ be two pairwise linearly independent linear forms, and $f,f'$  the associated calibrating functions defined using Lemma \ref{lem:cal}. The following holds true:
    \begin{equation*}
        \int_{\T^2} h \wedge h' = \int_{\T^2} d_x f \wedge d_x f'.
    \end{equation*}
\end{lemma}

\begin{proof}[Proof of the Lemma \ref{le:integral_forms}]
We can suppose that the dual basis $(h,h')$ is positively oriented.
    Consider the maps
    \begin{eqnarray*}
	      L : \R^2  \to    \R^2   \, \, \, \, \, \, \, \,  \, \, \, \, \, \, \, \, \, \, \, \, \, \, \, \, & \, \, \text{and} \, \, &     G : \R^2 \to    \R^2\\
	      \tilde{x}  \mapsto    (h(\tilde{x}), h'(\tilde{x}))   	    & &   \, \, \, \, \, \, \, \,  \, \, \, \, \, \,  \tilde{x}  \mapsto    (f(\tilde{x}), f'(\tilde{x}))   
    \end{eqnarray*}
    The linear map $L$ induces a map of degree one on the quotient
    \begin{equation*}
        \bar{L} : \T^2=\R^2/\Z^2 \to \R^2/L(\Z^2).
    \end{equation*}
    Besides, according to Lemma \ref{lem:cal}, for any $\tilde{x} \in \R^2$ and $z \in \Z^2$, we have that
$G(\tilde{x}+z)=   G(\tilde{x})+L(z)$,
    and therefore each map $G$ also induces a well defined map
    \begin{equation*}
        \bar{G} : \T^2=\R^2/\Z^2 \to \R^2/L(\Z^2)
    \end{equation*}
    of degree one.
    The linear forms  $d\tilde{x}_1$ and $d\tilde{x}_2$ on $\R^2$ induce $1$-forms on $\R^2/L(\Z^2)$ that we denote by $dx_1$ and $dx_2$. Then
\[        \int_{\T^2} h \wedge h' = \int_{\T^2} \bar{L}^\ast (dx_1\wedge dx_2) = \int_{\R^2/L(\Z^2)} dx_1 \wedge dx_2 = \int_{\T^2} \bar{G}^\ast (dx_1\wedge dx_2) = \int_{\T^2} d_x f \wedge d_x f'.
  \]
  \end{proof}

Finally we state the following intuitive result, a proof of which can be found in \cite[Lemma 5.1]{burago_ivanov}.
\begin{lemma}	\label{le:cyclic_order}
	Let $h_1, h_2, h_3 \in \partial B^\ast_{st}$ be three linear forms, and $f_1 , f_2 , f_3$ the three associated functions defined using Lemma \ref{lem:cal}. Let $\tilde{x} \in \R^2$ be a point where $d_{\tilde{x}} f_i$ is defined for $i=1,2,3$. 
	
	Then $\{d_{\tilde{x}} f_1 , d_{\tilde{x}} f_2 , d_{\tilde{x}} f_3\} \subset \partial B^\ast_{\tilde{F}}(\tilde{x})$ have the same cyclic order that $\{h_1 , h_2 , h_3\} \subset \partial B^\ast_{st}$.
\end{lemma}

Loosely speaking, this lemma holds true because two minimal geodesics intersect at most once. This fact is purely $2$-dimensional, and does no longer hold in higher dimensions.

\section{Isosystolic inequalities for Finsler metrics and Holmes-Thompson area}\label{sec:HT}

In this  section, we prove two optimal isosystolic inequalities on the $2$-torus for the Holmes- Thompson notion area: one for reversible Finsler metrics, and another for (possibly) non-reversible Finsler ones. As already said, the same strategy applies for both proofs and proceeds as follows: prove that the associated stable norm has smaller systolic area than the original metric, and then apply the corresponding flat optimal systolic inequality already proved in section \ref{sec:flatFinsler}. By Proposition \ref{prop:sys_stab}, the main step in the proof therefore boils down to show the following statement: {\it while passing for a 2-torus from a Finsler metric to its stable norm does not change the systole, it decreases the Holmes-Thompson area}.

\subsection{Decreasing the Holmes-Thompson area}

Let us formally state this as follows.

\begin{theorem} \cite{burago_ivanov} \label{th:decreasing_area}
	Let $(\T^2,F)$ be a Finsler torus. Then:
$$
		\area_{HT}{(\T^2,F)} \geq \area_{HT}(\T^2,\|\cdot\|_{st}^F).
$$
\end{theorem}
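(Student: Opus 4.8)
The plan is to reformulate the inequality in terms of the two families of convex bodies and exploit the dynamical, symplectic nature of Holmes–Thompson volume. Recall that $\area_{HT}(\T^2,F) = \frac{1}{\pi}\int_{[0,1]^2} |\widetilde{K}^\circ_x|\, d\tilde{x}$, which is (up to the constant) the symplectic volume of the unit co-disk bundle $\{(x,p) : p \in \widetilde{K}^\circ_x\} \subset T^*\T^2$, or equivalently the Liouville measure of the region enclosed by the unit cotangent sphere bundle. The dual picture is the crucial one: the polar bodies $\widetilde{K}^\circ_x$ describe the \emph{dual} norms $F^*(x,\cdot)$ on cotangent spaces, and lengths of curves are computed by a supremum pairing against covectors with $F^*(x,p) \le 1$. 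The stable norm $\|\cdot\|^F_{st}$ has a dual norm whose unit ball I will call $B^*_{st}$, and the key classical fact (a form of the Hahn–Banach / Federer duality between the stable norm and closed $1$-forms, see \cite{burago_ivanov}) is that $B^*_{st}$ is exactly the set of cohomology classes $[\omega] \in H^1(\T^2;\R) \simeq \R^2$ represented by a closed $1$-form $\omega$ with $F^*(x,\omega_x) \le 1$ pointwise, i.e.\ $\omega_x \in \widetilde{K}^\circ_x$ for all $x$. In other words, a constant covector $\xi$ lies in $B^*_{st}$ if and only if there is a function $u$ on $\T^2$ with $\xi + du \in \widetilde{K}^\circ_x$ for every $x$.

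With this in hand the strategy is an averaging argument in the cotangent picture, entirely parallel to the Loewner proof reproduced above but performed fiberwise on covectors. First I would fix $\xi \in B^*_{st}$ extremal, pick $u_\xi$ with $\xi + du_\xi(x) \in \widetilde{K}^\circ_x$ for all $x$, and observe that this exhibits, for \emph{each} boundary direction of $B^*_{st}$, a ``section'' of the dual unit ball bundle. The goal is to show $|B^*_{st}| \le \frac{1}{|[0,1]^2|}\int_{[0,1]^2} |\widetilde{K}^\circ_x|\, d\tilde{x}$; note that since $\|\cdot\|^F_{st}$ is a flat metric, $\area_{HT}(\T^2,\|\cdot\|^F_{st}) = |B^*_{st}|/\pi$, so this is exactly the claimed inequality. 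To prove it, consider the map $\Phi : [0,1]^2 \times (\text{interior of } B^*_{st}) \to [0,1]^2 \times \R^2$ sending $(x,\xi) \mapsto (x,\, \xi + du_\xi(x))$ where $u_\xi$ depends suitably (measurably, and affinely in $\xi$ by taking $u_\xi = \sum \xi_i u_{e_i}$ after checking the support-function identity extends — this requires care) on $\xi$; by construction $\Phi$ lands in the unit co-disk bundle $\{(x,p): p \in \widetilde{K}^\circ_x\}$. If one can arrange that $\Phi$ is volume-preserving, or at least has Jacobian $\ge 1$ in the appropriate direction, then integrating over $x$ and applying Fubini gives $|B^*_{st}| \le \fint_{[0,1]^2}|\widetilde{K}^\circ_x|\, d\tilde{x}$, which is the result.

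The honest subtlety, and what I expect to be the main obstacle, is that the correspondence $\xi \mapsto u_\xi$ is \emph{not} linear in general: the $1$-forms that calibrate the stable norm in different directions need not add up, so the naive map $\Phi$ need not be volume-preserving or even injective. The standard way around this — and I believe the route the authors take — is to pass to the \emph{minimizing measures} / Mather theory or, more elementarily, to use the fact that $\area_{HT}$ is monotone under ``pushing the co-disk bundle inward along closed $1$-forms.'' Concretely: for any closed $1$-form $\omega$ on $\T^2$, the metric $F$ and the metric $F_\omega$ with $\widetilde{K}^\circ_x(\omega) := \widetilde{K}^\circ_x - \omega_x$ (translating each fiber polar body) have the same stable norm shifted by $[\omega]$ but, by Fubini and translation-invariance of Lebesgue measure on each fiber, \emph{exactly the same} $\area_{HT}$. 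Hence we may as well replace $\widetilde{K}^\circ_x$ by its ``average over the $\Z^2$-translation action'' in a way that makes the fiberwise bodies as large and symmetric as possible while shrinking their intersection-of-translates; one then checks that the stable norm unit ball $B^*_{st}$ is contained in $\fint_x \widetilde{K}^\circ_x$ (an inclusion of convex bodies: every point of $B^*_{st}$ is, by the calibration description, a convex average of points lying in the fibers $\widetilde{K}^\circ_x$ as $x$ ranges over a ``minimizing'' distribution, hence lies in the average body by the Brunn–Minkowski-type inequality $|A_1 + \cdots| \ge \ldots$ — actually here one wants $\fint K_x \supseteq$ the set of barycenters, which is immediate), and finally $|B^*_{st}| \le |\fint_x \widetilde{K}^\circ_x| \le \fint_x |\widetilde{K}^\circ_x|$, the last step being concavity of $|\cdot|^{1/2}$ and Jensen, or more simply just the fact that averaging of convex bodies does not decrease volume is false — so instead one uses that $B^*_{st} \subseteq \fint_x \widetilde{K}^\circ_x$ directly gives $|B^*_{st}| \le |\fint_x \widetilde{K}^\circ_x|$, and a separate argument (this is the delicate point) that $|\fint_x \widetilde{K}^\circ_x| \le \fint_x |\widetilde{K}^\circ_x|$ holds in dimension $2$ because the Minkowski average of planar convex bodies satisfies $|\tfrac12(A+B)| \le \tfrac12(|A|+|B|)$ is \emph{false} in general (Brunn–Minkowski goes the wrong way!), so one must instead integrate the support-function description and use that $\area_{HT}$ in $2$D is $\frac{1}{2\pi}\int_{S^1} h_{\widetilde{K}^\circ_x}(\theta)\, dh/d\theta$-type formula which \emph{is} linear in $h$, so averaging the support functions over $x$ is compatible with averaging the areas \emph{only up to a curvature term of a sign we must control}. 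Resolving exactly this sign — i.e.\ showing the ``curvature defect'' incurred by averaging the dual bodies over the base is nonnegative, which is where reversibility is \emph{not} needed but the two-dimensionality and the structure of closed $1$-forms is — is, I expect, the technical heart of \cite{burago_ivanov}'s argument, and I would organize the proof around establishing precisely that inequality.
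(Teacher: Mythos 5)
Your reduction is sensible as far as it goes: describing the dual stable unit ball $B_{st}^\circ$ via calibrations $\xi+du_\xi$ with $\xi+du_\xi(\tilde x)\in\widetilde{K}^\circ_{\tilde x}$ for all $\tilde x$, and reducing the theorem to $|B_{st}^\circ|\leq\int_{[0,1]^2}|\widetilde{K}^\circ_{\tilde x}|\,d\tilde x$, matches the starting point of the paper (the calibrating functions of Lemma \ref{lem:cal} are exactly such $u_\xi$'s, with unit-norm differentials). But the route you then follow collapses at the step you yourself flag. From $\xi=\int_{[0,1]^2}\bigl(\xi+du_\xi(\tilde x)\bigr)d\tilde x$ you do get that $B_{st}^\circ$ is contained in the fiberwise (Aumann--Minkowski) average of the bodies $\widetilde{K}^\circ_{\tilde x}$; however the inequality you would then need, that the area of the average body is at most the average of the areas, is \emph{false} for general families of planar convex bodies: Minkowski averaging can increase area (average a long thin horizontal rectangle with a long thin vertical one). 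Your first map $\Phi$ is abandoned because $\xi\mapsto u_\xi$ is not affine, and your closing appeal to a ``curvature defect of a sign we must control'' is precisely the unproved content; nothing in your outline uses the special structure of the calibrating forms or the two-dimensionality, so there is no reason the sign works out, and as written the proposal is not a proof.

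What the paper does instead, and what is missing from your argument, is a \emph{pointwise} rather than averaged comparison, powered by Lemma \ref{le:cyclic_order}: if $h_1,\dots,h_N\in S^\ast_{st}$ are cyclically ordered, then at almost every $\tilde x$ the differentials $d_{\tilde x}f_i$ of the associated calibrating functions are again cyclically ordered in $S^\ast_{\tilde x}$ --- the genuinely two-dimensional input, coming from the fact that two minimizing geodesics in $\R^2$ cross at most once. One then decomposes $|B_{st}^\circ|$, up to an error made arbitrarily small, into the triangles $\Delta_i$ spanned by $0,h_i,h_{i+1}$, observes that $2|\Delta_i|=\int_{\T^2}h_i\wedge h_{i+1}=\int_{\T^2}df_i\wedge df_{i+1}$ because $[df_i]=[h_i]$ in $H^1_{dR}(\T^2;\R)$ (in the paper this is phrased via the degree-one maps $\bar L_i,\bar G_i$ to $\R^2/L_i(\Z^2)$), and uses the cyclic order to see that, pointwise, the triangles spanned by $0$, $d_{\tilde x}f_i$, $d_{\tilde x}f_{i+1}$ tile $\conv\set{d_{\tilde x}f_i}\subset K_x^\circ$, so that $\sum_i\tfrac12\,d_xf_i\wedge d_xf_{i+1}\leq|K_x^\circ|\,dx_1\wedge dx_2$. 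Integrating and letting $\conv\set{h_i}$ exhaust $B_{st}^\circ$ gives the theorem. This pointwise convex-hull bound is exactly what replaces your false averaged inequality; to salvage your outline you would have to prove the cyclic-order statement (or some equivalent non-crossing/laminarity property of the calibrating foliations), since the containment $B_{st}^\circ\subset$ average body alone cannot yield the area bound.
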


\begin{proof}
Since $\|\cdot\|_{st}^F$ depends continously on $F$, we assume the Finsler metric to be smooth and quadratically convex. The general statement follows by approximation. 

 Let $h_1, \dots h_N \in \partial B^\ast_{st}$ be a collection of cyclically positively ordered and pairwise linearly independent linear forms, and $f_1,\ldots,f_N$  the associated functions defined using Lemma \ref{lem:cal}.  By Lemma \ref{le:integral_forms} we know that
    \begin{equation*}
        \int_{\T^2} h_i \wedge h_{i+1} = \int_{\T^2} d_x f_i \wedge d_x f_{i+1}
    \end{equation*}
for $i=1,\ldots,N$ using a cyclic notation.
Now notice that $h_i \wedge h_{i+1} = \norm{h_i \wedge h_{i+1}}^\ast dx_1 \wedge dx_2$, where the norm of this 2-form is the one induced by the standard Euclidean structure of $\R^2$. Geometrically, the norm of such a 2-form coincides with twice the dual Lebesgue measure of the triangle defined by $0, h_i , h_{i+1}$ on $(\R^2)^*$, and since the collection of linear forms $h_1, \dots, h_N$ is cyclically ordered, this implies that
\begin{equation*}
    \frac{1}{2}\sum_{i = 1}^N h_i \wedge h_{i+1} = \abs{\conv{\set{h_i}_{i=1}^N}}^\ast dx_1 \wedge dx_2.
\end{equation*}
Here we have denoted by $\abs{\cdot}^\ast$  the dual Lebesgue measure on $(\R^2)^*$.
Similarly, and because the collection $d_x f_1, \dots d_x f_N$ is also cyclically ordered according to Lemma \ref{le:cyclic_order}, we can write
\begin{equation*}
    \frac{1}{2} \sum_{i = 1}^N d_x f_i \wedge d_x f_{i+1} = \abs{\conv{\set{d_x f_i}_{i=1}^N}}^\ast dx_1 \wedge dx_2.
\end{equation*}
Thus Lemma \ref{le:integral_forms} implies that
\begin{equation*}
    \int_{\T^2} \abs{\conv{\set{h_i}_{i=1}^N}}^\ast dx_1 \wedge dx_2 = \int_{\T^2} \abs{\conv{\set{d_x f_i}_{i=1}^N}}^\ast dx_1 \wedge dx_2.
\end{equation*}

Using the isomorphism $\R^2 \simeq (\R^2)^\ast$ given by $\tilde{x} \mapsto \langle \tilde{x},\cdot\rangle$, we can check that given a convex body $K$ the Lebesgue measure of the polar body $K^\circ$ and the dual Lebesgue measure of the dual body $K^\ast:=\{f \in (\R^2)^\ast \mid f|_{K}\leq 1\}$ coincide.
Therefore, for each choice of $\{h_i\}_{i=1}^N$ we can write the Holmes-Thompson area of $(\T^2,\|\cdot\|_{st}^F)$ as
\begin{align*}
    \area_{HT}(\T^2,\|\cdot\|_{st}^F) &= \frac{\abs{B^\ast_{st}}^\ast}{\pi} \\
    &= \frac{\abs{B^\ast_{st}}^\ast}{\abs{\conv{\set{h_i}}}^\ast} \cdot \frac{1}{\pi} \int_{\T^2} \abs{\conv{\set{h_i}}} ^\ast dx_1 \wedge dx_2 \\
    &= \frac{\abs{B^\ast_{st}}^\ast}{\abs{\conv{\set{h_i}}}^\ast} \cdot \frac{1}{\pi} \int_{\T^2} \abs{\conv{\set{d_x f_i}}}^\ast dx_1 \wedge dx_2 \\
    \text{($\conv{\set{d_x f_i}}\subset K_x^\ast$, by convexity)} \quad &\leq \frac{\abs{B^\ast_{st}}^\ast}{\abs{\conv{\set{h_i}}}^\ast} \cdot \frac{1}{\pi} \int_{\T^2} \abs{K^\ast_x}^\ast dx_1 \wedge dx_2   \\
    &= \frac{\abs{B^\ast_{st}}^\ast}{\abs{\conv{\set{h_i}}}^\ast} \area_{HT} (\T^2, F).
\end{align*}

Finally, an adequate choice of the collection $\set{h_i}_{i=1}^N$ allows as to make the ratio $\abs{B^\ast_{st}}^\ast / \abs{\conv{\set{h_i}}}^\ast$ arbitrarily close to 1. Hence, we conclude that
\begin{equation*}
    \area_{HT}(\T^2,\|\cdot\|_{st}^F) \leq \area_{HT} (\T^2, F).
\end{equation*}
  \end{proof}

\subsection{Holmes-Thompson area in the non-flat reversible case}

We now deduce the optimal isosystolic inequality for Holmes-Thompson area and reversible Finsler metrics on the two-torus first observed in \cite{sabourau}.

\begin{theorem}\cite{sabourau}
	Any Finsler reversible torus $(\T^2,F)$ satisfies the following optimal isosystolic inequality:
	\begin{equation*}
	\area_{HT}(\T^2,F) \geq \frac{2}{\pi} \sys{(\T^2,F)}^2.
	\end{equation*}	
 Equality holds for the flat metric corresponding to the supremum norm $\|\cdot\|_\infty$.
\end{theorem}

\begin{proof}
	Given a reversible Finsler metric on $\T^2$, we have that
	\begin{eqnarray*}
	\area_{HT}(\T^2,F) & \geq  & \area_{HT}(\T^2,\|\cdot\|_{st}^F) \, \, \text{(by Theorem \ref{th:decreasing_area})}\\
&\geq &{2\over \pi} \sys(\T^2,\|\cdot\|_{st}^F)^2 \, \, \text{(by Theorem \ref{th:HT_reversible})}\\
&= &{2\over \pi} \sys(\T^2,F)^2 \, \, \text{(by Proposition \ref{prop:sys_stab})}
\end{eqnarray*}
and the proof is complete.
\end{proof}

\subsection{Holmes-Thompson area in the non-flat and non-reversible case}

We establish now the optimal isosystolic inequality for Holmes-Thompson area and possibly non-reversible Finsler metrics on the two-torus. This result first appears in  \cite{ABT}.

\begin{theorem}\cite{ABT}
	Any Finsler torus $(\T^2,F)$ satisfies the following optimal isosystolic inequality:
	\begin{equation*}
		\area_{HT}(\T^2,F) \geq \frac{3}{2\pi} \sys(\T^2,F)^2.
	\end{equation*}
    Equality holds for the flat metric corresponding to the norm on $\R^2$ whose unit disc is the triangle with vertices $(1,0)$, $(0,1)$ and $(-1,-1)$.
\end{theorem}

\begin{proof}
		Given a Finsler metric $F$ on $\T^2$, we have that
	\begin{eqnarray*}
	\area_{HT}(\T^2,F) & \geq  & \area_{HT}(\T^2,\|\cdot\|_{st}^F) \, \, \text{(by Theorem \ref{th:decreasing_area})}\\
&\geq &{3\over 2 \pi} \sys(\T^2,\|\cdot\|_{st}^F)^2 \, \, \text{(by Theorem \ref{th:ABT})}\\
&= &{3\over 2 \pi} \sys(\T^2,F)^2 \, \, \text{(by Proposition \ref{prop:sys_stab})}
\end{eqnarray*}
and the proof is complete.
\end{proof}

\section{Isosystolic inequality for reversible Finsler metrics and Busemann-Hausdorff area}\label{sec:BH}

In this last section, we prove the optimal isosystolic inequality on the $2$-torus for the Busemann-Hausdorff notion of area and for reversible Finsler metrics. The main step consists in proving the following analog of Theorem \ref{th:decreasing_area} which was indirectly proved in \cite{burago_ivanov_2012}.

\begin{theorem} \cite{burago_ivanov_2012} \label{th:decreasing_area_BH}
	Let $(\T^2,F)$ be a reversible Finsler torus. Then:
$$
		\area_{BH}{(\T^2,F)} \geq \area_{BH}(\T^2,\|\cdot\|_{st}^F).
$$
\end{theorem}

Indeed using this result we can easily show the optimal isosystolic inequality presented in the introduction as follows.
\begin{proof}[Proof of Theorem \ref{th:opt}]
Given a Finsler reversible metric $F$ on $\T^2$, we have that
	\begin{eqnarray*}
	\area_{BH}(\T^2,F) & \geq  & \area_{BH}(\T^2,\|\cdot\|_{st}^F) \, \, \text{(by Theorem \ref{th:decreasing_area_BH})}\\
&\geq &{\pi \over 4} \sys(\T^2,\|\cdot\|_{st}^F)^2 \, \, \text{(by Minkowski's first Theorem \ref{th:Mink})}\\
&= &{\pi\over 4} \sys(\T^2,F)^2 \, \, \text{(by Proposition \ref{prop:sys_stab})}
\end{eqnarray*}
and the proof is complete.
\end{proof}

\subsection{Decreasing the Busemann-Hausdorff area}\label{sec:decreasing_area_BH}

So we are left to prove the above theorem.

\begin{proof}[Proof of Theorem \ref{th:decreasing_area_BH}]
As already explained in the introduction, it turns out that this result has been indirectly proved  in \cite{burago_ivanov_2012}. More precisely, the authors proved there that {\it a region in a two-dimensional affine subspace of a normed space has the least Hausdorff area among all compact surfaces with the same boundary}. But another result \cite[Theorem 1]{burago_ivanov} of the same authors ensures that this statement is equivalent to Theorem \ref{th:decreasing_area_BH}. 
We remark that their proof of this equivalence is by no means straightforward, and this is why we now propose a more direct proof of Theorem \ref{th:decreasing_area_BH} based on the notion of calibrating functions, and the following technical result.

\begin{proposition} \cite{burago_ivanov_2012} \label{pr:estimate_wedge_product}
    Let $K \subset \R^2$ be a symmetric convex body, 
    $\varphi_1 , \dots, \varphi_N \in K^*$ and $p_1, \dots, p_N$ a collection of nonnegative real numbers such that $\sum_{i=1}^N p_i= 1$. Then
    \begin{equation*}
        \sum_{1\leq i<j\leq N} p_i p_j \norm{\varphi_i \wedge \varphi_j}^\ast \leq \frac{1}{\abs{K}}
    \end{equation*}
    In addition, equality holds when $K$ is a symmetric convex $2N$-gon with (cyclically ordered) vertices $a_1 \dots a_{2N}$, $\varphi_i$ are the supporting functions $h_i$ associated to each edge $a_ia_{i+1}$ of $K$ and the weights are given by the formula $p_i = \norm{a_i \wedge a_{i+1}}/\abs{K}$.
\end{proposition}

\begin{remark}
Recall that the supporting function $h$ associated to an edge $e$ of  a polygon is the unique linear form such that the line $\{h=1\}$ contains the edge $e$.
    Also recall that $\norm{\cdot}$ (resp. $\norm{\cdot}^\ast$)  denotes here the norm induced by the Euclidean structure of $\R^2$ on the space of $2$-vectors (resp. the space of $2$-forms). In particular $\norm{a_i \wedge a_{i+1}}$ (resp. $\norm{\varphi_i \wedge \varphi_j}^\ast$) is twice the Lebesgue measure of the triangle defined by $0, a_i , a_{i+1}$ in $\R^2$ (resp. the dual Lebesgue measure of the triangle defined by $0, \varphi_i , \varphi_j$ in $(\R^2)^*$).
\end{remark}

\begin{proof}[Proof of Proposition \ref{pr:estimate_wedge_product}]
     We first prove the equality case. Assume that $K$ is a convex symmetric $2N$-gon with (cyclically ordered) vertices $a_1 \dots a_{2N}$, and take $p_i = \norm{a_i \wedge a_{i+1}}/\abs{K}$ for $i=1,\ldots,N$.
    If we set $v_i = \overrightarrow{a_ia_{i+1}}\in \R^2$ for $i=1\ldots N$, one can see that
  \[       
  \sum_{1\leq i<j\leq N} \norm{v_i \wedge v_j} = \sum_{j=2}^N \sum_{i=1}^j \norm{v_i \wedge v_j}
= \sum_{j=2}^N  \norm{\left(\sum_{i=1}^j v_i\right) \wedge v_j}
= \sum_{j=2}^N  \norm{\overrightarrow{a_1a_{j+1}} \wedge \overrightarrow{a_ja_{j+1}}}
       = \abs{K}
\]
since all the pairs $(v_i,v_j)$ for ${i<j}$ have the same orientation.
Denote for $i=1\ldots N$ by $h_i$ the supporting function associated to the edge $a_ia_{i+1}$ of $K$.  The map $J:v \in \R^2 \mapsto (dx_1\wedge dx_2)(v,\cdot)\in (\R^2)^\ast$ defines an isomorphism such that
     $$
     \norm{J(v)\wedge J(v')}^\ast=\norm{v \wedge v'}
     $$
     for any $v,v' \in \R^2$ as a simple computation shows.
We easily check that $J(v_i)=\norm{a_i \wedge a_{i+1}} h_i$, which  implies that
    \begin{equation*}
        p_i p_j \norm{h_i \wedge h_j} ^\ast= \frac{1}{\abs{K}^2}  \norm{v_i \wedge v_j}
    \end{equation*}
for any $1 \leq i<j\leq N$.
    We conclude by adding up these equalities.

    To prove the general inequality, we can reduce ourselves to the case where $K$ is a $2N$-gon and the forms $\varphi_i$ are precisely its supporting functions $h_1,\ldots,h_N$---that is, $K^\ast$ is the convex hull of the set $\{\pm h_1,\ldots,\pm h_N\}$---, while the weights $p_1, \dots, p_{N}$ are arbitrary. Indeed, first observe that it is enough to prove the result for symmetric polygons, as we can approximate $K$ by a sequence of symmetric polygons $\{K_i\}$ such that $K_i\subset K$ (this last condition ensures that $\varphi_1,\ldots,\varphi_N \in K_i^\ast$ as $K^\ast \subset K_i^\ast$  for all $i$, and thus proving the lemma for the $K_i$'s, with the original $\varphi_1,\ldots,\varphi_N$ and $p_1,\ldots,p_N$, will imply the general case). So let assume that $K$ is a convex symmetric $2n$-gon such that $K^\ast=\text{conv}\{\pm h_1,\ldots,\pm h_n\}$ where $n$ is possibly different from $N$.
    The sum $\sum_{1\leq i<j\leq N} p_i p_j \norm{\varphi_i \wedge \varphi_j}^\ast$ is convex in each variable $\varphi_i \in K^*$, hence its restriction to $K^*$ attains its maximum at some vertex of the boundary. Thus we can suppose that each $\varphi_i$ is a certain supporting function in $\{\pm h_1,\ldots,\pm h_N\}$ in order to bound this sum from above. If during this process we get $h_i = \pm h_j$ for some $i< j$, we can just drop one of these linear forms of the list, and replace the weight of the other linear form by the total weight $p_i + p_j$ without modifying the sum. At the end we find new weights $p_i'$ for $i=1,\ldots,n$ suming up to $1$ (by possibly setting $p'_i=0$ if the corresponding supporting function $h_i$ does not show up during the minimization process) such that 
    $$
    \sum_{1\leq i<j\leq N} p_i p_j \norm{\varphi_i \wedge \varphi_j}^\ast\leq\sum_{1\leq i<j\leq n} p'_i p'_j \norm{h_i \wedge h_j}^\ast.
    $$
Therefore in order to obtain the general inequality it is enough to prove that the second sum above is at most equal to $1/|K|$.

    Finally assume that $K$ is a symmetric convex $2N$-polygon with (cyclically ordered) vertices $a_1 \dots a_{2N}$ and supporting functions $h_i$  associated to each edge $a_ia_{i+1}$ for $i=1,\ldots,N$, and  let $p_1, \dots , p_N$ be nonnegative real numbers such that $\sum_i p_i = 1$. Set $v_i=\overrightarrow{a_ia_{i+1}}$, $q_i = \norm{a_i \wedge a_{i+1}}/\abs{K}$ and $\lambda_i = p_i/q_i$. Now consider a symmetric convex $2N$-gon $K'$ with vertices $a'_1 \dots a'_{2N}$ satisfying the condition $\overrightarrow{a'_ia'_{i+1}}=\lambda_i v_i$. Denote $v'_i=\lambda_i v_i$. Then
    \begin{eqnarray*}
        \sum_{i<j} p_i p_j \norm{h_i \wedge h_j}^\ast 
        &=& \sum_{i<j} \lambda_i \lambda_j q_i q_j \norm{h_i \wedge h_j}^\ast \\
        &= &\frac{1}{\abs{K}^2} \sum_{i<j} \lambda_i \lambda_j \norm{v_i \wedge v_j} \\ 
        &= &\frac{1}{\abs{K}^2} \sum_{i<j} \norm{v'_i \wedge v'_j} \\
        &= &\frac{\abs{K'}}{\abs{K}^2}. 
    \end{eqnarray*}
    Observe that the condition  $\sum_i p_i = 1$ is equivalent to the condition that $\abs{K}=V(K,K')$ where $V(K,K')$ denotes the mixed volume of $K$ and $K'$.  Indeed, denoting by $d_i$ the distance of the supporting line $\{h_i=1\}$ to the origin, we have
    \begin{eqnarray*}
    1&=&\sum_{i=1}^N p_i=\sum_{i=1}^N \lambda_i q_i \\
    &=&\frac{1}{\abs{K}}\sum_{i=1}^N \lambda_i \norm{a_i \wedge a_{i+1}} \\
    &=&\frac{1}{\abs{K}}\sum_{i=1}^N \lambda_i \norm{v_i} d_i    \\
    &=&\frac{1}{\abs{K}}\sum_{i=1}^N \norm{v'_i} d_i    
    \end{eqnarray*}
    where the last sum is recognized (see \cite[Section 5.1]{schneider}) to be the mixed area in the special case of polygons.  Then Alexandrov-Fenchel inequality (see \cite[Theorem 7.3.1]{schneider}) gives that $V(K,K')\geq \sqrt{\abs{K}\abs{K'}}$, which implies that $\abs{K'} \leq \abs{K}$ and concludes the proof.
\end{proof}

We are now ready to give a short proof of Theorem \ref{th:decreasing_area_BH}.

Since $\|\cdot\|_{st}^F$ depends continously on $F$, we assume the Finsler metric to be smooth and quadratically convex. The general statement follows by approximation.

Fix $\lambda>1$. We can find a symmetric $2N$-gon $K_N$ with (cyclically ordered) vertices  $a_1, \dots, a_{2N}$ such that each edge $a_ia_{i+1}$ is tangent to $B_{st}$ and $\abs{K_N}/\abs{B_{st}}<\lambda$. Its dual body is given by $K_N^\ast = \conv{\set{h_i}}$ where the $h_1, \dots , h_{2N} \in \partial B^*_{st}$ are the supporting functions associated to each edge $a_ia_{i+1}$. (They are automatically pairwise linearly independent and cyclically ordered). Let $f_1,\ldots,f_{2N}$ be the calibrating  functions associated to $h_1, \dots , h_{2N}$ and defined using Lemma \ref{lem:cal}. Remember that their differentials satisfy that $\|d_xf_i\|^\ast_x=1$ at almost every $x \in \T^2$ and that they are cyclically ordered. Set $p_i = \norm{a_i \wedge a_{i+1}}/\abs{K_N}$ for $i=1, \dots, N$ and observe that $\sum_{i=1}^N p_i=1$. Then
\begin{align*}
\frac{\area_{BH}(\T^2,\|\cdot\|_{st}^F)}{\lambda} &=  \frac{\pi}{\lambda\abs{B_{st}}} \\
&<  \frac{\pi}{\abs{K_N}} \\
    \text{(by the equality part in Proposition \ref{pr:estimate_wedge_product})} \quad &= \pi \sum_{1\leq i<j\leq N} p_i p_j \norm{h_i \wedge h_j}^\ast  \\
    &= \pi \sum_{1\leq i<j\leq N} p_i p_j \int_{\T^2} \norm{h_i \wedge h_j}^\ast dx_1 \wedge dx_2 \\
    &= \pi \sum_{1\leq i<j\leq N} p_i p_j \int_{\T^2} h_i \wedge h_j \\
    \text{(by Lemma \ref{le:integral_forms})} \quad &= \pi \sum_{1\leq i<j\leq N} p_i p_j \int_{\T^2} d_x f_i \wedge d_x f_j \\
    &= \pi \int_{\T^2} \sum_{1\leq i<j\leq N} p_i p_j \norm{d_x f_i \wedge d_x f_j}^\ast dx_1 \wedge dx_2 \\
    \text{(by Proposition \ref{pr:estimate_wedge_product})} \quad &\leq \int_{\T^2} \frac{\pi}{\abs{K_x}} dx_1 \wedge dx_2 \\
    &= \area_{BH}(\T^2,F),
\end{align*}
which concludes the proof as inequality holds for any $\lambda>1$.
\end{proof}

\subsection{A counterexample in the non-reversible case}\label{sec:cex}

We already know that systolic freedom holds for general Finsler metrics and Busemann-Hausdorff area, see section \ref{sec:sysfree}.
Still it is not clear if Theorem \ref{th:decreasing_area_BH} holds for non-reversible Finsler metrics. To conclude this section, we will explain a construction proposed to us by Ivanov to show that this is not the case. We would like to thank him for allowing us to expose his counterexample in this article.

    First consider on the 1-torus $\T^1 = \R / \Z$ endowed with the Finsler metric $\varphi: T\T^1 \rightarrow \R$ defined on the unitary vectors $\pm \partial_x$ by
    \begin{equation*}
        \varphi(x,\partial_x)=
        \begin{cases}
            \begin{array}{ll}
                1 &\text{for } x \in [0,\frac{1}{2} ) \\
                A &\text{for } x \in [\frac{1}{2},1 ) 
            \end{array}
        \end{cases}
        \text{ and } \qquad
        \varphi(x,-\partial_x)=
        \begin{cases}
            \begin{array}{ll}
                A &\text{for } x \in [0,\frac{1}{2} ) \\
                1 &\text{for } x \in [\frac{1}{2},1 ) 
            \end{array}
        \end{cases}
    \end{equation*}
    where $A>0$ is some positive constant.     
Now define the following Finsler metric on $\T^2$:
    \begin{equation*}
        F(v) = \sqrt{\varphi^2(d\pi_1 (v)) + dy^2 (v)}
    \end{equation*}
    for any $v \in T\T^2$, where $\pi_1 : \T^2 \rightarrow \T^1$ denotes the projection onto the first factor. The unit ball of such a Finsler metric has Lebesgue measure
    \begin{equation*}
        \abs{K_{(x,y)}} = \frac{\pi}{2} \left(1+\frac{1}{A}\right).
    \end{equation*}
    Hence, the Busemann-Hausdorff area of $(\T^2,F)$ is given by the following formula:
    \begin{equation*}
        \area_{BH}(\T^2, F) = \int_{\T^2} \frac{\pi}{\abs{K_{(x,y)}}} dxdy = \frac{2}{1+\tfrac{1}{A}}.
    \end{equation*}
    
    On the other hand, we first observe that $B_{st}$ is symmetric with respect to the reflections over the horizontal axis and the vertical axis respectively. Indeed, while the horizontal symmetry is straightforward, for the vertical symmetry just remark that in the universal cover the unit balls at a point $(x,y)$ and its translated $(x+1/2,y)$ are obtained one from each other by a vertical reflection, and then use the fact that the definition of the stable norm does not depend on the chosen based point in formula (\ref{def:stab}). As $\stnorm{(1,0)} = \frac{1+A}{2}$ and $\stnorm{(0,1)} = 1$, it implies that $B_{st} \subset [-\frac{2}{1+A},\frac{2}{1+A}] \times [-1,1]$. Therefore
    \begin{equation*}
        \abs{B_{st}} \leq \frac{8}{1+A}
    \end{equation*}
    and we get that
    \begin{equation*}
        \area_{BH}(\T^2,\|\cdot\|_{st}^F) = \frac{\pi}{\abs{B_{st}}} \geq \frac{\pi}{8} (1+A).
    \end{equation*}
    So for $A \to \infty$ we have that $\area_{BH}(\T^2, F) \to 2$ while $\area_{BH}(\T^2,\|\cdot\|_{st}^F) \to \infty$.

    The metric $F$ is not continuous, but we can smooth it to obtain a  counterexample to a possible generalization of Theorem \ref{th:decreasing_area_BH}.\\

\noindent {\bf Acknowledgements.} We would like to thank K. Tzanev for providing  Figure \ref{integerlines}, and D. Azagra for a valuable exchange. We are also indebted to S. Ivanov for his careful reading of the first draft of this article, where he detected that a result (Theorem 5.1 in the first draft version arXiv:2201.05010v1) and its proof were incorrect, and for allowing us to expose the counterexample of subsection \ref{sec:cex}. Finally we would like to thank the two anonymous referees for their careful reading and useful comments that help to improve the present paper.\\







\end{document}